\documentclass{amsart}
\usepackage{amsmath}%
\usepackage{amsthm}
\usepackage{amsfonts}%
\usepackage{amssymb}%
\usepackage[abbrev, nobysame]{amsrefs}
\usepackage{enumitem}
\usepackage{graphicx}
\usepackage{fullpage}
\usepackage{color}
\usepackage[utf8]{inputenc}
\usepackage[english]{babel}
\usepackage{soul}
\usepackage{cancel}
\usepackage{tikz}
\usepackage{caption}

\newcommand{\Rb}{\mathbb{R}}

\newtheorem{theorem}{Theorem}[section]

\newtheorem{corollary}[theorem]{Corollary}

\newtheorem{lemma}[theorem]{Lemma}

\newtheorem{remark}[theorem]{Remark}

\begin{document}
\title{Capacity estimates and improved lower bounds for the inner radius of nodal domains}
\author{Philippe Charron}
\address{Department of Mathematics, University College London, Gower Street, London, WC1E 6BT, UK}
\email{p.charron@ucl.ac.uk}
\dedicatory{In loving memory of Vincent Charron}
\date{\today}

\begin{abstract}
    We show that for every closed, smooth manifold $(M,g)$ of dimension $d$, there exists $c(g)$ such that any nodal domain $\Omega_\lambda$ of a Laplace eigenfunction with eigenvalue $\lambda$ contains a geodesic ball of radius at least $c(g) \lambda^{-1/2} \log\log(\lambda)^{-1/2}$ if $d=3$ and $c(g) \lambda^{-1/2} \log(\lambda)^{-\frac{d-3}{2}}$ if $d >3$. This ball is centered at any point at which the eigenfunction attains its maximum in absolute value within the nodal domain. Furthermore, we show that for any $d \geq 3$, there exist sequences of $\lambda$-nodal domains on $\mathbb{T}^d$  whose inner radius is of order $o(\lambda^{-1/2})$.
\end{abstract}

\maketitle


\section{Introduction and main result}

Let $(M,g)$ be a smooth closed manifold of dimension $d$. Let $\Delta_g = -\text{div}_g(\nabla_g)$ be the positive-definite Laplace-Beltrami operator on $M$ and $\phi_\lambda$ be a solution of $\Delta_g \phi_\lambda = \lambda \phi_\lambda$. A nodal domain $\Omega_\lambda$ of $\phi_\lambda$ is a connected component of the set $\{ \phi_\lambda \neq 0 \}$. We are interested in the inner radius of nodal domains, i.e.  the radius of the largest geodesic ball that can be inscribed inside $\Omega_\lambda$ and which we will denote by $\text{Inrad}(\Omega_\lambda)$.

It is a classical result (see \cites{berard-meyer, bruning78}) that $\text{Inrad}(\Omega_\lambda) \leq C(g) \lambda^{-1/2}$. If $d=2$, it was proven in \cite{mang-inrad} that $\text{Inrad}(\Omega_\lambda) \geq c(g) \lambda^{-1/2}$. 

Various universal lower bounds in dimension $d \geq 3$ have been proven over the years, see \cites{geor, mang-inrad, mang-la} . The sharpest lower bound to date has been proven recently by Mangoubi and the author: if $x_0$ is any point where $|\phi_\lambda|$ attains its maximum on a nodal domain $\Omega_\lambda$, then there is a geodesic ball of radius $c(g) \lambda^{-1/2} (\log \lambda)^{\frac{2-d}{2}}$ centered at $x_0$ which is inscribed in $\Omega_\lambda$.

    The main result of this paper is an improved lower bound on the inner radius in dimensions $3$ and above:

\begin{theorem}\label{theorem:maintheorem}
    Let $(M,g)$ be a smooth closed manifold of dimension $d \geq 3$ and $\phi_\lambda$ be a solution of \linebreak $\Delta_g \phi_\lambda = \lambda \phi_\lambda$. There exists a constant $c(g)$ such that for any nodal domain $\Omega_\lambda$ of $\phi_\lambda$, 

    \begin{align}
        \text{Inrad}(\Omega_\lambda) &\geq  c\lambda^{-1/2}(\log\log\lambda)^{-1/2}  \quad & \text{if} \quad d=3 \, , \label{eq1}\\
    \text{Inrad}(\Omega_\lambda) &\geq c \lambda^{-1/2}(\log\lambda)^{\frac{3-d}{2}} \quad & \text{if} \quad d \geq 4 \, . \label{eq2}
    \end{align}

    Furthermore, that ball can be inscribed around any point where $|\phi_\lambda|$ attains its maximum in $\Omega_\lambda$.
\end{theorem}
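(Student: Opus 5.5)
We rescale to wavelength scale and argue by capacity, following the scheme of Mangoubi and the author. Fix $x_0\in\Omega_\lambda$ with $|\phi_\lambda(x_0)|=\max_{\Omega_\lambda}|\phi_\lambda|$, and normalize so that $\phi_\lambda(x_0)=1$. Rescale the metric by $\sqrt{\lambda}$ around $x_0$. On a ball $B$ of radius comparable to $1$ we then have a function $u$ with $\Delta u = u$ up to lower-order metric terms, $u(x_0)=1\ge |u|$ on the nodal domain $\Omega$. By doubling estimates (Donnelly--Fefferman), $\sup_{2B}|u|\le e^{C\sqrt\lambda}$ on the rescaled ball. Take $r$ to be the distance from $x_0$ to the nodal set, so the goal is $r\gtrsim (\log\log\lambda)^{-1/2}$ when $d=3$ and $r\gtrsim(\log\lambda)^{(3-d)/2}$ when $d\ge4$.

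The key tool is the following. Since $u\le 1$ on $\Omega$ and $u$ is a subsolution of the harmonic-type equation with the compensating term $u\ge 0$ (so $\Delta_g u\le$ bounded), $u$ restricted to $\Omega\cap B_\rho(x_0)$ is controlled by harmonic measure. Precisely, write $u(x_0)=1$ as an average of $u$ on $\partial B_\rho$ minus a Green potential of $u$, which is $O(\rho^2)$. This forces $u$ to be close to $1$ on a large portion of $\partial B_\rho\cap\Omega$ for $\rho\ll1$. On the other hand, the complement $B_\rho\setminus\Omega$ contains the nodal point at distance $r$, and in fact a whole nodal domain of the opposite sign. That domain has inradius at most $C$ and satisfies a Faber--Krahn/capacity lower bound: the negative domain touching $B_r(x_0)$ has capacity at least comparable to that of a set of diameter $\gtrsim$ its inradius, using the maximum-principle argument applied to that adjacent domain. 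I would then estimate the harmonic measure of $\partial\Omega$ seen from $x_0$ through a Wiener-type capacity sum over dyadic annuli $A_k=B_{2^{-k}}\setminus B_{2^{-k-1}}$ between scales $r$ and $1$. Each annulus meeting the zero set in a set of relative capacity $\ge \kappa$ reduces the maximum of $u$ by a factor $(1-c\kappa)$.

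The iteration runs as follows. Suppose $u$ vanishes at distance $r$. Apply the Harnack/capacity estimate across the $\log(1/r)$ annuli. Each step costs a multiplicative factor in the growth of $\sup u$ on successive balls, and $\sup_{B_1}|u|\le e^{C\sqrt\lambda}$ caps the total growth. The improvement over the previous $(\log\lambda)^{(2-d)/2}$ bound should come from using the capacity of the thin tubular negative region rather than of a single point neighborhood. The zero set near $x_0$ must contain a continuum that reaches distance $\sim 1$, since nodal domains have inradius $\le C$. The capacity of a segment of length $\ell$ and thickness $t$ in dimension $d$ behaves like $\ell\, t^{d-3}$ for $d\ge4$ and like $\ell/\log(1/t)$ for $d=3$. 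Comparing with the required drop $e^{-C\sqrt\lambda}\cdots$ after taking logarithms should produce exactly a loss of $(\log\lambda)^{(d-3)/2}$ in $r^{-1}$, and, in $d=3$, a loss of $\log\log\lambda$ from $\log(1/t)\sim\log\log\lambda$.

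The main obstacle is the capacity lower bound for the set $\{u\le0\}$ inside $B_\rho(x_0)$. I would try to show it contains a connected set of diameter $\gtrsim\rho$ that is not too thin, with thickness $\ge e^{-C\sqrt\lambda}$-type or polynomial scale coming from the vanishing order. The obstruction is that nodal sets of the opposite sign can be very thin tentacles. I would handle this by bounding the thickness below via the previous inradius result (Mangoubi--Charron) applied to the negative domain, which gives $t\gtrsim(\log\lambda)^{(2-d)/2}$. Inserting this into the line-capacity formula yields the two cases. Verifying that this plugged-in thickness gives precisely the stated exponents is the delicate bookkeeping I expect to be hardest.
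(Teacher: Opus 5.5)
Your proposal has a genuine gap at the step you flag yourself. In addition, the mechanism around it never actually brings the Donnelly--Fefferman bound into play.

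You want to rule out a zero at distance $r$ by bounding $\text{Cap}(\Omega^c\cap B_\rho(x_0))$ from below. For this you use a thickness $t\gtrsim(\log\lambda)^{(2-d)/2}$ of the adjacent opposite-sign domain, taken from the earlier inradius theorem. That theorem only provides a ball centred at a point where $|\phi_\lambda|$ is maximal on the negative domain, and that point need not lie anywhere near $B_\rho(x_0)$. The part of the negative domain entering $B_\rho(x_0)$ may be an arbitrarily thin tentacle. The connected set you find has no capacity lower bound in terms of its diameter: segments have zero capacity for $d\ge3$, which is exactly what Section~\ref{section:proofofshrinking} exploits. So no lower bound for the capacity of the complement near $x_0$ is available, and the exponent bookkeeping, which rests entirely on this $t$, is never carried out.

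Independently, your harmonic-measure/Wiener step only uses $u\le1$ on $\Omega$ and never sees the values of $\phi_\lambda$ on the other nodal domains. Such an argument is $\lambda$-independent. It can only reproduce the known estimate $\text{Cap}(\Omega^c\cap B_{\delta/\sqrt\lambda})\le C\delta^2\,\text{Cap}(B_{\delta/\sqrt\lambda})$ of Theorem~\ref{lemma:capacitycomplement}, and the cap $\sup|u|\le e^{C\sqrt\lambda}$ has nothing to act on.

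The missing idea is to use that capacity bound in the opposite direction, and to control $|\phi_\lambda|$ off $\Omega$ with a Remez inequality.

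\emph{Capacity step.} The paper proves Lemma~\ref{lemma:additivityofcapacity}: a set filling half of a ball of radius $1/(4N)$ in each of $N$ distinct annuli of width $1/(4N)$ has capacity $\gtrsim 1/\log N$ if $d=3$ and $\gtrsim N^{3-d}$ if $d\ge4$. The proof applies spherical-cap rearrangement and then Steiner symmetrization, whose image contains a cylinder of length $c$ and radius $c/N$. With $\eta=C\delta^2$, this forces $\Omega$ itself to fill at least half of $\gtrsim N$ such annular balls, where $N\sim e^{c\delta^{-2}}$ (resp. $N\sim\delta^{-2/(d-3)}$).

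\emph{Growth step.} On each of these balls, $|\phi_\lambda|\le\phi_\lambda(x_0)$ holds on the large subset $\Omega$. Together with Theorem~\ref{cor:remez-eigen} this yields $\sup_{2b}|\phi_\lambda|\ge(\sup_b|\phi_\lambda|)^{1+c}$ once $\sup_b|\phi_\lambda|$ exceeds $L\phi_\lambda(x_0)$. Iterating across the $N$ annuli gives a doubling index $\ge e^{cN}$. This contradicts $\mathcal N\le C_{DF}\sqrt\lambda$ unless $\sup_{B_{\delta/(2\sqrt\lambda)}}|\phi_\lambda|\le L\phi_\lambda(x_0)$.

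\emph{Conclusion.} The constraint $N\lesssim\log\lambda$ is precisely what produces $(\log\log\lambda)^{-1/2}$ and $(\log\lambda)^{(3-d)/2}$. The gradient estimate then excludes zeros in $B_{c\delta/\sqrt\lambda}(x_0)$.

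Without this density--Remez--doubling chain, or some other way of controlling $\phi_\lambda$ on the neighbouring nodal domains, your argument has no route from the capacity information to a $\lambda$-dependent radius.
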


A natural open question is: what is the optimal lower bound for the inner radius of nodal domains? 

If $M$ is a flat torus with edge lengths which are rationally independent, the spectrum is simple, nodal domains are hyper-rectangles and one can easily verify that there exists a constant $c$ which depends only on the edge lengths such that $\text{Inrad}(\Omega_\lambda) \geq c \lambda^{-1/2} $.

However, it is not true that a lower bound of the form $c(g) \lambda^{-1/2}$ holds in the general case:

\begin{theorem}\label{theorem:maintheoremshrinking}
    For any $d \geq 3$, there is a sequence $k(d)$, solutions to  $\Delta\phi_{\lambda_{k}} = \lambda_k \phi_{\lambda_k}$ on the flat square torus $\mathbb T^d$ and a nodal domain $\Omega_{\lambda_k}$ such that $$\text{Inrad}(\Omega_{\lambda_k}) =o\left(\lambda_k^{-1/2}\right) \, .$$
\end{theorem}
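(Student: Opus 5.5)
The plan is to work at a fixed base eigenvalue $\lambda_0$ and then rescale. If $u$ satisfies $\Delta u=\lambda_0 u$ on $\mathbb T^d$ (period $2\pi$) and $n\in\mathbb N$, then $u_n(x)=u(nx)$ satisfies $\Delta u_n=n^2\lambda_0u_n$. Its nodal domains are the preimages of those of $u$ under $x\mapsto nx$, so their inner radii are divided by $n$. A single fixed $u$ therefore gives inner radius $\asymp\lambda^{-1/2}$, and smallness has to come from varying $u$.

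I would use a one-parameter family $u_t=\psi+t\,g$, with $\psi,g$ in the $\lambda_0$-eigenspace, in which some nodal domain $\Omega_t$ of $u_t$ degenerates as $t\to t_0$, in the sense that $\text{Inrad}(\Omega_t)=:r(t)\to0$. Choosing $t_k\to t_0$ and $n_k=k$, the functions $u_{t_k}(kx)$ have eigenvalue $\lambda_k=k^2\lambda_0$ and a nodal domain of inner radius $r(t_k)/k=o(\lambda_k^{-1/2})$, which is Theorem~\ref{theorem:maintheoremshrinking}. Everything reduces to finding such a degenerating family for a fixed $\lambda_0$. One needs $d\ge3$ and a degenerate eigenspace, since on rationally independent tori and in dimension two the lower bound $c\lambda^{-1/2}$ holds.

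The degeneration cannot be a domain shrinking to a point around a nondegenerate maximum. At a positive local maximum $p$ we have $\Delta u_t(p)=-\lambda_0u_t(p)$, so if $u_t(p)=\eta\to0$ the Hessian is forced to be $O(\eta)$ in trace and nonpositive. The resulting sublevel region then has size $\sim\sqrt{\eta/\eta}=O(1)$, not small. I therefore aim for a domain that collapses onto a \emph{curve}, i.e.\ becomes a thin tube, which is only possible when $d\ge3$. I would start from a $\psi$ whose nodal set contains a line $L$ (say $\{x_1=x_2=0\}$) along which $\psi$ vanishes to second order as a transversal saddle, e.g.\ $\psi=\cos x_1-\cos x_2\approx\tfrac12(x_2^2-x_1^2)$. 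I would then search among the eigenfunctions at some $\lambda_0$ with many lattice representations (e.g.\ $\lambda_0=2$ or $3$, including terms such as $\cos x_1\cos x_3$ and $\sin x_2\sin x_3$) for a perturbation $g$ with two properties. First, near $L$ the perturbed function $u_t$ should look like $\tfrac12(x_2^2-x_1^2)+t\,a(x_3)+(\text{mixed terms})$ in such a way that one of the four wedge-shaped regions adjacent to $L$ is pinched off. Second, the pinched region should be a bounded-cross-section component $\Omega_t$ contained in a tube $\{|(x_1,x_2)|\le C\sqrt{|t|}\}$ around $L$, possibly closed up periodically in $x_3$. The inner radius of such a domain is $O(\sqrt{|t|})$, which gives $r(t)\to0$.

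The main obstacle is exhibiting an explicit pair $\psi,g$ and verifying that the thin set near $L$ is a genuine \emph{separate} connected component of $\{u_t\neq0\}$, not a neck attached to a fat domain. Necks do not lower the inner radius. I would first attempt this by explicit computation on a small eigenspace ($\lambda_0=2$ or $3$ on $\mathbb T^3$), checking the sign of $u_t$ on the boundary of the tube via Taylor expansion in $(x_1,x_2)$ uniformly in $x_3$. For $d>3$ I would take products with $\sin(x_4)\cdots\sin(x_d)$-type factors, or embed the three-dimensional example trivially in the extra variables. Trivial embedding does not increase the inner radius, since the extra directions only lengthen the tube.
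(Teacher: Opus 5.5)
There is a genuine gap, and it is in your reduction itself: no family inside a fixed eigenspace can have nodal domains with $r(t)\to0$. Suppose $u_t$ lies in the $\lambda_0$-eigenspace and $\text{Inrad}(\Omega_t)\to0$. Fix one integer $k$ so large that $\Lambda=k^2\lambda_0$ exceeds the threshold implicit in Theorem~\ref{theorem:maintheorem}. The functions $u_t(kx)$ then all have the same eigenvalue $\Lambda$, and they have nodal domains of inner radius $\text{Inrad}(\Omega_t)/k\to0$. But Theorem~\ref{theorem:maintheorem} bounds the inner radius of every nodal domain of every $\Lambda$-eigenfunction below by a positive quantity depending only on $g$ and $\Lambda$. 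The earlier uniform lower bounds quoted in the introduction would serve equally well. So the statement to which you reduce everything is false.

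Your specific mechanism also fails for an elementary reason. Since $u_t$ is a positive Dirichlet eigenfunction on its nodal domain, $\lambda_1(\Omega_t)=\lambda_0$. A domain contained in $\{x_1^2+x_2^2<\rho^2\}$ satisfies $\lambda_1\ge j_{0,1}^2/\rho^2$, by the two-dimensional Poincar\'e inequality on each slice with the other variables fixed. Hence a nodal domain cannot lie in a tube of width $C\sqrt{|t|}\to0$. This is exactly the scaling obstruction you found for collapse to a point, and it applies just as well to collapse onto a curve.

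The correct picture is the reverse of yours. A nodal domain with small inner radius must still be fat at wavelength scale, because its first Dirichlet eigenvalue is $\lambda$. It is the complement that is thin: a set of small capacity coming within $o(\lambda^{-1/2})$ of every point, consistent with the capacity estimate for $\Omega_\lambda^c$ used in the paper. Moreover, the wavelength-scale shape of the domain must change along the sequence. That requires eigenspaces of growing dimension, not a rescaled fixed one.

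This is what the paper does. It takes $A_{n,d}$ to be a ball with finitely many thin sets around segments removed, so that $\text{Inrad}(A_{n,d})\le 1/n$ while $\lambda_1(A_{n,d})<2$; this is possible because segments have zero capacity when $d\ge3$. It normalizes $A_{n,d}$ to $U_{n,d}$ with $\lambda_1(U_{n,d})=1$. It then invokes the Enciso--Peralta-Salas realization theorem: for large $\lambda_k$ there is a toral eigenfunction with a nodal domain equal to $\lambda_k^{-1/2}\psi_k(U_{n,d})$, where $\psi_k$ is $C^1$-close to the identity. A diagonal argument in $n$ finishes the proof.
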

We will include a sketch of the proof of Theorem \ref{theorem:maintheoremshrinking}  in section \ref{section:proofofshrinking} as we could not find a complete proof in the literature. We note that most of the ideas of the proof can already be found in \cite{enc-persal1}, which uses results from \cite{enc-persal2} and \cite{enc-persal3}.

\subsection{Acknowledgments}
Work for this project started when the author was at Technion - Israel Institute of Technology, then at the University of Geneva and finally at University College London. The author acknowledges the support of the Zuckerman STEM Leadership Program, the SNSF and the Leverhulme Trust.

The author is very grateful to Dan Mangoubi for discussing various aspects of the problem as well as for reading an early version of this article and making useful suggestions, to Almut Burchard for  pointing out various results on rearrangements and to Iosif Polterovich for his continuons support throughout the project.

\section{Notation and tools}
\subsection{Notation}\label{subsection:notation}
\begin{itemize}
\item Throughout the paper, we will be working in dimension $d \geq 3$.
\item Constants $c$ and $C$ will change from line to line, but $c$ will refer to a lower bound and $C$ will refer to an upper bound. 
    \item Throughout the paper, we will work exclusively on charts.

\item For any $x \in M$, let $U_x \subset \mathbb R^d$ be the largest geodesic normal chart centered at $x$ which sends $x$ to the origin.

    \item Any ball $B=B_R(x)$  will refer to an open Euclidean ball. We define $2B=B_{2R}(x)$ as the concentric ball of twice the radius. We will also use the shorthand notation $B_R$ for $B_R(0)$. 
    The choice of Euclidean balls (rather than geodesic balls) is motivated by the use of rearrangements defined below which which have nice properties in the Euclidean setting (Theorem \ref{theorem:monotonicityrearrangements}).

\item Let $r_0$ be chosen such that for any $x \in M$, $B_{r_0}(0) \subset U_x$.

\item Let $f$ be a function defined in a ball $B_{2R}(y)$, . The doubling index of $f$ on $B_R(y)$ is defined as
$$\mathcal{N}(f, B_R(y)):=\log_2\frac{\sup_{B_{2R}(y)} |f|}{\sup_{B_{R}(y)} |f|} \, .$$

    \item Let $\mu_k$ be the standard $k$-dimensional Hausdorff measure for subsets of $\mathbb R^d$.

    \subsection{Rearrangements}\label{subsection:rearrangements}
We now introduce two types of rearrangements of sets, following the definitions in \cite{Sarvas}. We assume $\Omega \subset \mathbb R^d$ to be compact 

\item 
For $r \in \mathbb R^+$ and for $\delta \in [0,w_{d-1}r^{d-1}]$, where $w_{d-1}$ is the area of the unit sphere in $\mathbb R^d$, let \linebreak $Sc(r,\delta) \subset \partial B_r$ be the spherical cap centered around $(r,0,\ldots,0)$ such that  $\mu_{d-1}(Sc(r,\delta)) = \delta$.

For any $r >0$, let $\delta_\Omega(r) := \mu_{d-1}(\Omega \cap r \mathbb S^{d-1})$. 

The spherical cap rearrangement of $\Omega$, $Sc(\Omega)$, is defined as

$Sc(\Omega) := \bigcup\limits_{r >0} Sc(r,\delta_\Omega(r))$.

\item
Now, let $H$ be any hyperplane in $\mathbb R^d$. For any $x \in H$, let $l_x$ be the line passing through $x$ which is orthogonal to $H$ and $I_\Omega(x) \subset l_x$ the line segment of length $\mu_1(\Omega \cap l_x)$ centered at $x$. 

The Steiner rearrangement of $\Omega$ with respect to $H$, $St_H(\Omega)$, is defined as

$St_H(\Omega):= \bigcup\limits_{x \in H} I_\Omega(x)$.

An example of these two types of rearrangements is given in Figure \ref{picture:rearrangements}.

\begin{figure}
\centering

\begin{tikzpicture}[scale=0.4]


\draw[dotted] (0,-5) arc (-90:90:5);
\draw[dotted] (0,-4) arc (-90:90:4);
\draw[dotted] (0,-3) arc (-90:90:3);
\draw[dotted] (0,-2) arc (-90:90:2);
\draw[dotted] (0,-1) arc (-90:90:1);
\draw[->]  (5,5) arc (120:60:4);
\draw[]  (7,6.5) node{$Sc$};

\draw[->] (0,0) -- (5.5,0) ;
\draw[] (5.6,-0.6) node{ \small $x_1$};


\draw[red, fill] plot [smooth, tension=0.5] coordinates { (3.2,3.8) (2.7, 3.6) (1.5, 2) (1.5, 1.6) (1,1) (1, 2) (1.4, 2.1) (2.9,4.1)  };

\draw[red, fill] plot [smooth, tension=0.5] coordinates { (5, 0.1) (4.5, 0.1) (4,0.5) (3.5, 0.1) (3.1,0.1) (2,0.4) (1,0) (2, 0.3) (3.5,-0.2) (4,-0.5) (5, -0.1)};

\draw[red, fill] plot [smooth, tension=0.5] coordinates { (2.9,-4.1) (1.9, -2.6) (1.2, -2.8) (1.2, -2.6) (1.7,-2.3) (1,-1) (3,-4)  };


 \begin{scope}[shift={(10,0)}]

\draw[dotted] (0,-5) arc (-90:90:5);
\draw[dotted] (0,-4) arc (-90:90:4);
\draw[dotted] (0,-3) arc (-90:90:3);
\draw[dotted] (0,-2) arc (-90:90:2);
\draw[dotted] (0,-1) arc (-90:90:1);
\draw[->]  (5,5) arc (120:60:4);
\draw[]  (7,6.5) node{$St_H$};

\draw[blue, fill] plot [smooth, tension=0.5] coordinates { (5,0.2) (4.5, 0.2) (4,0.4) (3.5, 0.1) (3.2, 0.05) (3.1, 0.3) (3, 0.5) (2.9, 0.3) (2.85, 0.05) (2.4, 0.2)  (2.2, 0.5) (1.5, 0.1)  (1,0) (1.5, -0.1) (2.2,-0.5)  (2.4,-0.2) (2.85, -0.05) (2.9, -0.3) (3,-0.5) (3.1,-0.3) (3.2, -0.05) (3.5, -0.1) (4,-0.4) (4.5, -0.2)      (5,-0.2)   };

\draw[->] (0,0) -- (5.5,0) ;
\draw[] (5.6,-0.6) node{ \small $x_1$};

            \end{scope}

            \begin{scope}[shift={(20,0)}]

\draw[dotted] (0,-5) arc (-90:90:5);
\draw[dotted] (0,-4) arc (-90:90:4);
\draw[dotted] (0,-3) arc (-90:90:3);
\draw[dotted] (0,-2) arc (-90:90:2);
\draw[dotted] (0,-1) arc (-90:90:1);
\draw[] (2.5, 6) -- (2.5, -6);
\draw[black] (3.3,5.7) node{\small $H$};

\draw[black,fill] plot [smooth, tension=0.5] coordinates {(0.5,0) (0.8, 0.05) (1,0.1) (1.2,0.2) (1.5, 0.22) (2,0.45) (2.5, 0.5) (3,0.45) (3.5, 0.22) (3.8,0.2) (4,0.1) (4.2,0.05) (4.5,0) (4.2,-0.05) (4,-0.1) (3.8,-0.2) (3.5, -0.22) (3.5,-0.22) (3,-0.45) (2.5, -0.5) (2,-0.45) (1.5,-0.22) (1.2, -0.2) (1,-0.1) (0.8,-0.05) (0.5,0)};
    
\draw[->] (0,0) -- (5.5,0) ;
\draw[] (5.6,-0.6) node{ \small $x_1$};

            \end{scope}

\end{tikzpicture}

\caption{Example of rearrangements}
\label{picture:rearrangements}
\end{figure}

\subsection{Capacity}
\item For a compact set $K \subset \mathbb R^d$, its Newtonian capacity is defined as
$\text{Cap}(K) := \inf \int |\nabla f|^2 d\mu_d$, where the infimum is taken over all smooth functions $f$ such that $f \geq 1$ on $K$ and $f \to 0$ at infinity. Compact sets have non-zero finite capacity and we have the following scaling property: for any $a>0$
\begin{equation}\label{eq:scalingcapacity}
    \text{Cap}(a K) = a^{d-2} \text{Cap}(K) \, .
\end{equation}

The main property of rearrangements which we will use in this paper is the following:

\begin{theorem}[Symmetrizations decrease capacity, \cite{Sarvas}]\label{theorem:monotonicityrearrangements}
    Let $K$ be a compact set in $\mathbb R^d$ and $H$ be any $d-1$-dimensional hyperplane. We have both $\text{Cap}(Sc(K)) \leq \text{Cap}(\Omega)$ and $\text{Cap}(St_H(K)) \leq \text{Cap}(\Omega)$.
\end{theorem}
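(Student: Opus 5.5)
The plan is to prove both inequalities with one argument: a Pólya--Szegő inequality for the corresponding rearrangement of functions, obtained as a limit of two-point rearrangements (polarizations). Throughout I read $\text{Cap}(\Omega)$ in the statement as $\text{Cap}(K)$. Write $T$ for either $St_H$ or $Sc$.

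\emph{Step 1: rearranging functions.} For a nonnegative, compactly supported Lipschitz $f$, define $Tf$ through its superlevel sets: $\{Tf > t\} := T(\{f>t\})$ for every $t>0$. Since $T$ is monotone under inclusion, these sets are nested, so $Tf$ is well defined. Since $T$ preserves $\mu_1$ on each line $l_x$, respectively $\mu_{d-1}$ on each sphere $r\mathbb S^{d-1}$, Fubini shows that $Tf$ is equimeasurable with $f$.

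\emph{Step 2: set-level compactness and the constraint.} I first check that $T(K)$ is compact for compact $K$. The functions $x \mapsto \mu_1(K\cap l_x)$ and $r\mapsto \delta_K(r)$ are upper semicontinuous because $K$ is compact, and this upper semicontinuity gives closedness of the rearranged set. Next, by outer regularity of capacity, it suffices to take test functions $f$ with $0\le f\le 1$ and $f=1$ on an open neighbourhood $U\supset K$. Truncation only lowers $\int|\nabla f|^2$, and the decay $f\to 0$ at infinity can be replaced by compact support through a standard cutoff. Then $\{Tf\ge 1\}\supset T(U)\supset T(K)$ up to the boundary issues handled by the semicontinuity above. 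Hence
\[
\text{Cap}(T(K))\le \int |\nabla Tf|^2\, d\mu_d ,
\]
after a final mollification of $Tf$, since capacity can be computed with Lipschitz test functions.

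\emph{Step 3: Pólya--Szegő, the main obstacle.} It remains to show $\int|\nabla Tf|^2\le\int|\nabla f|^2$.

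For a half-space $\Sigma$ with reflection $\sigma$, the polarization $f^\Sigma$ takes the value $\max(f,f\circ\sigma)$ on $\Sigma$ and $\min(f,f\circ\sigma)$ on its complement. Almost everywhere, the gradient of $f^\Sigma$ is just a reflected and permuted copy of that of $f$. Hence $\int|\nabla f^\Sigma|^2=\int|\nabla f|^2$ exactly.

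The hard part is approximation: I need a sequence of polarizations $f_n$, with $f_0=f$ and $f_{n+1}=f_n^{\Sigma_n}$, converging to $Tf$ in $L^2$. The half-spaces are chosen as follows.
\begin{itemize}
\item For Steiner symmetrization, take half-spaces whose boundaries are parallel to $H$ and which contain the side of $H$ carrying the larger part of the mass.
\item For cap symmetrization, take half-spaces whose boundary passes through the origin and which contain the point $(1,0,\dots,0)$. These commute with the spherical structure.
\end{itemize}

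I would follow the Brock--Solynin scheme. Consider the compact set of functions equimeasurable with $f$ and obtainable as $L^2$-limits of iterated polarizations. Among these, pick the one minimising $\|g-Tf\|_{L^2}$. If $g\neq Tf$, exhibit a half-space of the admissible family whose polarization strictly decreases this distance; this contradicts minimality, so $Tf$ lies in the closure. Finally, the Dirichlet energies of the $f_n$ are constant and equal to $\int|\nabla f|^2$, so $f_n$ is bounded in $H^1$. Weak lower semicontinuity of the Dirichlet integral under this $L^2$ convergence then gives the inequality.

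The strict-decrease step, i.e.\ finding a good half-space whenever $g\ne Tf$, is where I expect the real work. It uses the fact that the polarization never increases the $L^2$ distance to a function that is already symmetric with respect to $\Sigma$, and strictly decreases it when $g$ and $Tf$ are ``misordered'' on a set of positive measure.
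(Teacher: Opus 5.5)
The paper does not prove this statement. It quotes Sarvas, who, in the tradition of P\'olya--Szeg\H{o} and Gehring, works directly with the level sets of regularized admissible functions and compares the Dirichlet integrand before and after symmetrization. Your polarization route is genuinely different and legitimate. Its only analytic input is the exact identity $\int|\nabla f^\Sigma|^2=\int|\nabla f|^2$ together with weak lower semicontinuity. It treats $St_H$ and $Sc$ uniformly and would carry over verbatim to $p$-capacities. The price is that all the difficulty is pushed into the approximation lemma, and the inequality only appears through a limit, so you get no information on equality cases. Your Steps 1--2 are fine, reading $\text{Cap}(\Omega)$ as $\text{Cap}(K)$. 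They do use the usual convention that a fibre contributes a degenerate segment or cap only when it meets $K$. Read literally, the paper's definition puts all of $H$ into $St_H(K)$, and your compactness check would then fail.

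Step 3 needs two corrections before the Brock--Solynin argument applies.

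First, the admissible family for $St_H$ is misstated. It must consist of \emph{all} half-spaces $\Sigma$ with $\partial\Sigma$ parallel to $H$ and $H\subset\Sigma$, regardless of where the mass sits. For $H=\{x_1=a\}$ these are $\{x_1<b\}$ with $b>a$ and $\{x_1>b\}$ with $b<a$. If you only use half-spaces containing the heavier side, say $\{x_1<b\}$ with $b\ge a$, the scheme breaks. Every $f\ge 0$ supported in $\{x_1<a-1\}$ is fixed by all of them, although $f\neq St_H f$, so the strictly improving half-space you need does not exist.

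Second, the strict-decrease step cannot rest on global equimeasurability. Any Steiner-symmetric (resp.\ cap-symmetric) $g$ is fixed by every admissible polarization, and many such $g$ are equimeasurable with $f$ yet differ from $Tf$. The missing ingredient has two parts:
\begin{enumerate}
\item admissible polarizations preserve the distribution of $f$ on a.e.\ line orthogonal to $H$ (resp.\ on a.e.\ sphere $\partial B_r$);
\item this fibrewise equimeasurability passes to $L^2$-limits, so every $g$ in your closed set satisfies $Tg=Tf$.
\end{enumerate}
With this, suppose $g\neq Tf$. Fubini gives a level $t$ and a single fibre containing $d$-dimensional density points $q$ of $\{Tf>t\}\setminus\{g>t\}$ and $p$ of $\{g>t\}\setminus\{Tf>t\}$. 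Here $q$ is strictly closer than $p$ to $H$ (resp.\ to the positive $x_1$-axis). The half-space $\Sigma$ bounded by the bisector of $p$ and $q$ and containing $q$ is then admissible. For $x\in\Sigma$ with $g(x)<g(\sigma x)$, polarization swaps the two values. It lowers $(g-Tf)^2(x)+(g-Tf)^2(\sigma x)$ by $2\bigl(g(\sigma x)-g(x)\bigr)\bigl(Tf(x)-Tf(\sigma x)\bigr)\ge 0$, strictly on a set of positive measure near $q$.

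Finally, the minimality argument also needs your closed set to be invariant under polarizations. That follows from the nonexpansiveness $\|u^\Sigma-w^\Sigma\|_2\le\|u-w\|_2$, which you should state.
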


    \item Let $ D_{L,R,d}$ be the $d$-dimensional cylinder of length $L$ and radius $R$, i.e. $D_{L,R,d} := B_R^{d-1} \times (0,L)$.

Combining Proposition 3.4 in \cite{port-stone} with the scaling propertiy \eqref{eq:scalingcapacity} of Newtonian capacity, we obtain the following estimates for the capacity of thin cylinders:

\begin{lemma}[Capacity of thin cylinders, Proposition 3.4 in \cite{port-stone}]\label{le:capacityofcylinders}
    For any $d \geq 3$, there are constants $c_d$, $c_d'$  and $C_d$ with the following property: if $L > c_d' R$, then we have the following inequalities:

    \begin{align}
        c_d \frac{L}{\log (L/R)} &\leq \text{Cap}(D_{L,R,d}) \leq C_d \frac{L}{\log (L/R)} &&& \text{if} \quad d=3\, ,\\
        c_d LR^{d-3} &\leq \text{Cap}(D_{L,R,d}) \leq C_d LR^{d-3} &&& \text{if} \quad d \geq 4\, .
    \end{align}
\end{lemma}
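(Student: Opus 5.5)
The argument has two parts: first reduce to the unit-radius cylinder by scaling, then estimate the capacity of a long thin cylinder by hand. Since $D_{L,R,d} = R\, D_{L/R,1,d}$, the scaling property \eqref{eq:scalingcapacity} gives
\[
\text{Cap}(D_{L,R,d}) = R^{d-2}\,\text{Cap}(D_{\ell,1,d}), \qquad \ell := L/R .
\]
It therefore suffices to show that for $\ell > c_d'$ we have $\text{Cap}(D_{\ell,1,d}) \asymp \ell/\log \ell$ when $d=3$ and $\text{Cap}(D_{\ell,1,d}) \asymp \ell$ when $d\ge 4$. Multiplying back by $R^{d-2}$ gives $L/\log(L/R)$ when $d=3$, since $R\cdot \ell/\log\ell = L/\log(L/R)$, and $LR^{d-3}$ when $d \geq 4$. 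This reduced statement is the content of Proposition 3.4 in \cite{port-stone}. To be self-contained, I would argue it as follows.

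\emph{Upper bound.} Write points as $(y,t)$ with $y\in\Rb^{d-1}$ and $t\in\Rb$. Subadditivity of capacity lets us cover $D_{\ell,1,d}$ by the union $S$ of about $\ell$ unit balls of radius $2$ centred along the axis.
\begin{itemize}
\item For $d\ge4$, subadditivity alone gives $\text{Cap}(S)\le C\ell$.
\item For $d=3$, subadditivity is too weak, and I would use an explicit test function. Take
\[
f(y,t)=\min\bigl(1,\ \log(\rho/\max(|y|,1))/\log\rho\bigr)\,\chi(t),
\]
where $\rho=\ell$ and $\chi$ is a cutoff equal to $1$ on $[0,\ell]$, supported in $[-\ell,2\ell]$, with $|\chi'|\le C/\ell$. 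The transverse gradient contributes $\ell\cdot 2\pi/\log\ell$. The $t$-derivative contributes at most $\ell^{-2}\cdot\ell\cdot\ell^2/(\log\ell)^2 \cdot C$, which is $O(\ell/\log^2\ell)$ after accounting for the fact that $f$ is small where $|y|$ is large. Together these give $\text{Cap}\le C\ell/\log\ell$.
\end{itemize}

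\emph{Lower bound.} I would use the dual characterization: $\text{Cap}(K)\ge \mu(K)^2/\iint G\,d\mu\,d\mu$ for any positive measure $\mu$ on $K$, where $G(x,z)=c|x-z|^{2-d}$. Take $\mu$ to be the uniform measure on the axis segment $\{0\}\times[0,\ell]$, thickened to the unit cross-section if needed. Then $\mu(K)=\ell$, and the energy is about $\ell\int_1^\ell s^{2-d}\,ds$ plus $O(\ell)$ from the diagonal. For $d=3$ this is $\ell\log\ell$, and for $d\ge4$ it is $O(\ell)$. This yields $\text{Cap}\ge c\,\ell^2/(\ell\log\ell)=c\,\ell/\log\ell$ when $d=3$ and $\text{Cap}\ge c\,\ell$ when $d\ge4$.

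The main obstacle is the $d=3$ upper bound, where the logarithm must be captured exactly. I expect the explicit logarithmic test function to work, but the end effects from the $t$-derivative need care. If they cause trouble, I would instead enclose the cylinder in a prolate ellipsoid with semi-axes $\ell$ and $1$, whose capacity is explicit and of order $\ell/\log\ell$, and conclude by monotonicity of capacity.
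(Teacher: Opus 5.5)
Your reduction is exactly what the paper does. The paper gives no argument beyond rescaling with \eqref{eq:scalingcapacity} to a radius-$1$ cylinder and quoting Proposition 3.4 of Port--Stone, and your bookkeeping $R\cdot \ell/\log\ell = L/\log(L/R)$ and $R^{d-2}\ell = LR^{d-3}$ is correct.

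What you add is a self-contained proof of the unit-radius estimate, which the paper does not attempt. It shows where the logarithm in $d=3$ comes from: the two-dimensional logarithmic profile in the upper bound, and the divergence of $\int_1^\ell s^{-1}\,ds$ in the energy for the lower bound. It is correct up to two small repairs.

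\begin{enumerate}
\item Taking $\min$ with $1$ in the test function does nothing, since $\log(\rho/\max(|y|,1))/\log\rho \le 1$ already. What you need is the positive part, because this quantity is negative for $|y|>\rho$. Without that truncation $f$ does not decay, and the transverse energy $\int_{|y|>\rho} |y|^{-2}(\log\rho)^{-2}\,dy$ diverges. With it, your bounds $C\ell/\log\ell$ and $C\ell/\log^2\ell$ are right, since $\int_{\mathbb R^2} g^2\,dy \le C\ell^2/\log^2\ell$.
\item In the lower bound, thickening is mandatory, not ``if needed''. A segment has zero capacity for $d\ge 3$ (the paper uses this in Section~\ref{section:proofofshrinking}), so the uniform measure on the axis has infinite energy. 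Your estimate of an $O(\ell)$ diagonal contribution holds only for a thickened measure, for instance normalized Lebesgue measure on the solid cylinder, where it follows from local integrability of $|x|^{2-d}$.
\end{enumerate}

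Similarly, in your fallback argument a prolate ellipsoid with semi-axes $\ell$ and $1$ does not contain the cylinder; taking the minor semi-axis equal to $2$ fixes this.
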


\begin{remark}
    By contrast, $\mu_d(D_{L,R,d}) = c(d) LR^{d-1}$.
\end{remark}

We will use the following estimate on the capacity of the complement of nodal domains on small balls (see also \cites{ lieb, mazya-shubin}):
\begin{theorem}[Capacity estimates of nodal domains, Theorem 1.6 in \cite{geor-mukh} and Theorem 4.6 in \cite{cha-man}]\label{lemma:capacitycomplement}
There are constants $r_0(g)$, $\delta_0(g)$ and $C(g)$ with the following property:
Let $\Omega_\lambda$ be a nodal domain of $\phi_\lambda$ and $x$ be a point where $\phi_\lambda$ attains its maximum in $\Omega_\lambda$. Then, for any $\delta < \delta_0$, on the chart $U_x$ we have the following estimate:

\begin{equation}
    \text{Cap}\left(\Omega^c \cap B_{\frac{\delta}{\sqrt{\lambda}}} \right)\leq C \delta^2 \text{Cap}\left(B_{\frac{\delta}{\sqrt{\lambda}}}\right) \, .
\end{equation}
    
\end{theorem}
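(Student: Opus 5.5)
The plan is to construct an explicit competitor for the capacity of $\Omega^c\cap B_r$, where $r=\delta/\sqrt{\lambda}$. The competitor is built from $1-\phi_\lambda/\phi_\lambda(x)$, and its energy is controlled by a Caccioppoli inequality for a nonnegative supersolution.

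\textbf{Setup.} Normalize $u=\phi_\lambda/\phi_\lambda(x)$. Then $u\le 1$ on $\Omega$ and $u(0)=1$ in the chart $U_x$, and without loss of generality $u>0$ on $\Omega$. Let $u^+$ be $u$ on $\Omega$ and $0$ elsewhere. Since $u$ vanishes on $\partial\Omega$, Kato's inequality gives, distributionally on $B_{2r}$,
\begin{equation*}
-\Delta_g u^+ \le \lambda u^+\le \lambda .
\end{equation*}
Here $-\Delta_g=\mathrm{div}_g\nabla_g$ is the analyst's Laplacian, and in coordinates it is a uniformly elliptic divergence-form operator $L$ with smooth coefficients close to the Euclidean ones. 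Set $h=1-u^+$. Then $h\ge 0$ on $B_{2r}$, $h=1$ on $\Omega^c$, $h(0)=0$, and $Lh\le\lambda$.

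Next we correct by a quadratic term to get a genuine supersolution. Let $q(y)=\lambda(4r^2-|y|^2)/(2d)$, adjusted by a bounded factor so that $Lq\le -\lambda$ for the variable coefficients when $r_0$ is small. Define $g=h+q$. Then $g\ge0$ on $B_{2r}$, $Lg\le0$, and
\begin{equation*}
g(0)\le C\lambda r^2=C\delta^2, \qquad 0\le g\le 1+C\delta^2\le 2 .
\end{equation*}

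\textbf{Smallness in $L^1$.} Apply the weak Harnack inequality for nonnegative supersolutions on $B_{2r}$; in the flat case this is just the mean value inequality for superharmonic functions. It gives $\fint_{B_{2r}} g\le C\inf_{B_{r}} g\le C g(0)\le C\delta^2$. To get the full $L^1$ average, rather than an $L^p$ average with small $p$, use the boundedness $g\le 2$; this also yields
\begin{equation*}
\int_{B_{2r}} g^2\le 2\int_{B_{2r}} g\le C\delta^2 r^d .
\end{equation*}
I expect this step to be the main technical point. Passing from the classical superharmonic mean value property to the variable-coefficient operator requires either weak Harnack (Moser/Trudinger) with care about the exponent, or a comparison argument in geodesic normal coordinates. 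The smallness $\delta<\delta_0$ is exactly what makes the metric perturbation and the $\lambda q$ correction harmless.

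\textbf{Energy bound and conclusion.} Let $\eta$ be a cutoff equal to $1$ on $B_r$, supported in $B_{2r}$, with $|\nabla\eta|\le C/r$. Test $Lg\le 0$ against $\eta^2 g\ge0$ to obtain the Caccioppoli inequality
\begin{equation*}
\int\eta^2|\nabla g|^2\le C\int g^2|\nabla\eta|^2\le C r^{-2}\delta^2 r^d .
\end{equation*}
Hence
\begin{equation*}
\int|\nabla(\eta g)|^2\le C\delta^2 r^{d-2}.
\end{equation*}
Since $g\ge h=1$ on $\Omega^c\cap B_r$, the function $\eta g$ (after smoothing) is admissible for $\text{Cap}(\Omega^c\cap B_r)$. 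Therefore
\begin{equation*}
\text{Cap}(\Omega^c\cap B_r)\le C\delta^2 r^{d-2}=C'\delta^2\,\text{Cap}(B_r),
\end{equation*}
where the last equality uses \eqref{eq:scalingcapacity}. The Euclidean Dirichlet energy and the $g$-energy are comparable on the chart, so the constants depend only on $g$.
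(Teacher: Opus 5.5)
The paper gives no proof of this statement; it is quoted from Georgiev--Mukherjee, whose argument is probabilistic (Feynman--Kac plus Brownian hitting probabilities), and from Charron--Mangoubi. Your analytic route is therefore your own, and most of it is sound. Write $w:=h+q$ for your supersolution (renamed to avoid the clash with the metric). You correctly obtain $w\ge 0$, $Lw\le 0$, $w\le 2$, $w\ge 1$ on $\Omega^c$, $w(0)\le C\delta^2$, and hence $\int_{B_{2r}}w\le C\delta^2 r^d$.

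\textbf{The step that fails is the energy estimate.} For a supersolution the weak inequality is $\int a\nabla w\cdot\nabla\psi\ge 0$ for all $\psi\ge 0$. With $\psi=\eta^2 w$ this yields
\[
\int\eta^2 a\nabla w\cdot\nabla w\ \ge\ -2\int\eta w\,a\nabla w\cdot\nabla\eta ,
\]
which is a \emph{lower} bound on the energy. Caccioppoli with test function $\eta^2 w$ is a statement about nonnegative \emph{sub}solutions. The inequality you wrote, $\int\eta^2|\nabla w|^2\le C\int w^2|\nabla\eta|^2$, is false for nonnegative superharmonic functions. Take $w=\min\bigl(1,(\epsilon/|x|)^{d-2}\bigr)$ with $\epsilon\ll r$. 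The left side is comparable to $\epsilon^{d-2}$. The right side is $O(\epsilon^2/r)$ when $d=3$ and $O(\epsilon^d r^{-2}\log(r/\epsilon))$ when $d\ge 4$. This is exactly the configuration you must handle: a tiny piece of $\Omega^c$ near which the energy concentrates. That energy sits in the Riesz measure $-Lw$, i.e. the flux of $\nabla u$ through $\partial\Omega$, which $\int w^2|\nabla\eta|^2$ does not see.

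\textbf{The repair.} It uses exactly the two facts you already have, $w\le 2$ and the $L^1$ bound. Test instead with $\psi=\eta^2(2-w)\ge 0$, which gives
\[
\int\eta^2 a\nabla w\cdot\nabla w\le 2\int a\nabla w\cdot\nabla(\eta^2)-2\int\eta w\,a\nabla w\cdot\nabla\eta .
\]
Integrating by parts, the first term equals $-2\int w\,L(\eta^2)\le Cr^{-2}\int_{B_{2r}}w$; take $\eta\in C^2$ with $|D^2\eta|\le Cr^{-2}$. The second term is absorbed by Cauchy--Schwarz. Together these give $\int\eta^2|\nabla w|^2\le Cr^{-2}\int_{B_{2r}}(w+w^2)\le C\delta^2 r^{d-2}$. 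Note that the linear term genuinely needs the $L^1$ bound, not only the $L^2$ bound you derived.

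\textbf{Smaller points.}
\begin{enumerate}
\item Weak Harnack (Gilbarg--Trudinger, Theorem 8.18) already holds with $p=1$, indeed for any $1\le p<d/(d-2)$, so it is not the delicate step. Your proposed use of $w\le 2$ to upgrade a small-$p$ average would only give $\delta^{2p}$.
\item Weak Harnack does require a nonnegative supersolution on $B_{4r}$. So build $q$ with $16r^2$ in place of $4r^2$.
\item The Kato step should read $\text{div}_g\nabla_g(u1_\Omega)\ge-\lambda u1_\Omega$. Your display has the sign reversed, although $Lh\le\lambda$ is correct.
\item Other positive nodal domains can touch $\Omega$, so $u1_\Omega$ is not $\max(u,0)$. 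It is still a subsolution, as the uniform limit of $(u-\epsilon)^+1_\Omega$, each of which locally coincides with $(u-\epsilon)^+$ or with $0$.
\end{enumerate}
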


\end{itemize}

\subsection{Eigenfunctions and elliptic estimates}\label{subsection:tools}

We have the following fundamental bound on the doubling index for eigenfunctions:
\begin{theorem}[Theorem 4.2 (ii) in \cite{don-fef88}]\label{thm:df-growth}
    Let $(M, g)$ be a closed Riemannian manifold. Let $\phi_\lambda$ satisfy \linebreak $\Delta_g \phi_\lambda = \lambda \phi_\lambda$ in a chart $U_x$. There is a constant $C_{DF}(g)$ such that if $r < r_0/2$, for any $y \in B_r$ we have the following inequality:

    $$\mathcal{N}\left(\phi_\lambda, B_r(y)\right) \leq C_{DF}\sqrt{\lambda}\ .$$
   
\end{theorem}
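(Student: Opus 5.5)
The plan is to reduce the statement to a frequency-monotonicity argument for a harmonic function in one extra dimension. Then I will control the frequency at the fixed scale $r_0$ by a global propagation-of-smallness argument.

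\emph{Step 1 (harmonic lift).} On $M\times\Rb$ I set $u(x,t)=\phi_\lambda(x)e^{\sqrt{\lambda}\,t}$, which satisfies $(\Delta_g-\partial_t^2)u=0$. So $u$ is harmonic for the product metric $g+dt^2$, which is a fixed smooth metric independent of $\lambda$. On the cylinder $\{|t|\le r_0\}$ the factor $e^{\sqrt\lambda t}$ varies by at most $e^{2\sqrt\lambda r_0}$. Hence for balls $B_r(y)$ with $r<r_0/2$, the doubling index of $\phi_\lambda$ on $B_r(y)$ and that of $u$ on the $(d+1)$-ball $\tilde B_r((y,0))$ differ by at most $C\sqrt\lambda$. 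It therefore suffices to bound the doubling index of $u$ by $C\sqrt\lambda$.

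\emph{Step 2 (almost-monotonicity of frequency).} For solutions of divergence-form elliptic equations with smooth (Lipschitz suffices) coefficients, the Garofalo--Lin version of Almgren's frequency function $N_u(p,\rho)$ satisfies $e^{C\rho}N_u(p,\rho)$ nondecreasing for $\rho<r_0$, with $C$ depending only on $g$. The standard comparison between frequency and doubling index then gives
\[
\mathcal N(u,\tilde B_r(p))\le C\,\mathcal N(u,\tilde B_{R}(p))+C \qquad (r\le R\le r_0/4).
\]
Thus it is enough to prove $\mathcal N(u,\tilde B_{r_0/4}(p))\le C\sqrt\lambda$ uniformly in $p$.

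\emph{Step 3 (bound at the fixed scale).} This is the main obstacle: I must show that $|\phi_\lambda|$ cannot be exponentially smaller than $e^{-C\sqrt\lambda}\sup_M|\phi_\lambda|$ on any ball of radius $r_0/8$. The numerator is easy, since $\sup_{\tilde B_{r_0/2}}|u|\le e^{C\sqrt\lambda}\sup_M|\phi_\lambda|$. For the denominator I would use a chain argument.

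First pick $q$ with $|\phi_\lambda(q)|=\sup_M|\phi_\lambda|$, which is possible by compactness. Next connect $q$ to $p$ by a chain of $N(g)$ overlapping balls of radius $r_0/8$; the number of balls is bounded by compactness and the diameter of $M$. Along the chain I apply the Hadamard three-balls inequality for $u$, which follows from the frequency monotonicity in Step 2 (or from a Carleman estimate). Each step costs at most a factor $e^{C\sqrt\lambda}$, because the outer sup in every step is bounded by $e^{C\sqrt\lambda}\sup|\phi_\lambda|$ through the $t$-direction growth. After $N$ steps this gives $\sup_{\tilde B_{r_0/8}(p)}|u|\ge e^{-CN\sqrt\lambda}\sup_M|\phi_\lambda|$.

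The delicate point is the bookkeeping: the exponents from the three-balls inequality compound multiplicatively along the chain. Since $N$ is fixed, they still yield only a constant times $\sqrt\lambda$. Combining Steps 1--3 gives $\mathcal N(\phi_\lambda,B_r(y))\le C_{DF}\sqrt\lambda$.
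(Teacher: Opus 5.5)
Your argument is correct, but it is not the route behind the paper's statement. The paper gives no proof and simply quotes Donnelly--Fefferman, whose argument works with the eigenvalue equation itself through Carleman-type estimates. You instead follow the harmonic-lift/frequency-function route (due to Lin). Since $u=\phi_\lambda e^{\sqrt{\lambda}t}$ is harmonic for the $\lambda$-independent metric $g+dt^2$ (the same lift the paper uses to obtain Corollary \ref{cor:ellipticestimates}), the almost-monotonicity of the frequency and the exponent $\alpha\in(0,1)$ of the three-balls inequality depend only on $g$. All of the $\lambda$-dependence is then isolated in the explicit factor $e^{\sqrt{\lambda}|t|}$ at the fixed scale $r_0$.

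What your route buys is a short, elementary proof, together with the almost-monotonicity of the doubling index in the scale, which is finer than the bound $C_{DF}\sqrt{\lambda}$ itself. The Carleman approach is heavier, but it does not depend on the symmetric divergence structure behind the frequency function, so it adapts more readily to operators with lower-order terms or without that structure.

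Three small points for the write-up:
\begin{enumerate}
\item The chain actually gives $\sup_{\tilde B_{r_0/8}(p)}|u|\geq e^{-C\alpha^{-N}\sqrt{\lambda}}\sup_M|\phi_\lambda|$, not $e^{-CN\sqrt{\lambda}}$. This is harmless, as you note, since $N=N(g)$.
\item Your reduction in Step 2 only covers $r\leq r_0/4$. The remaining range $r_0/4<r<r_0/2$ should be handled directly by the fixed-scale upper and lower bounds of Step 3.
\item The maximum point $q$ need not lie in $U_x$. So the chain should be run with geodesic balls of $(M\times\Rb,g+dt^2)$, converting to Euclidean chart balls only at the end.
\end{enumerate}
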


The next result we will use is a direct corollary of the Remez inequality for solution of elliptic equations found in \cite{logu-mali-icm}. It will allow us to estimate the growth of eigenfunctions from arbitrary subsets of large measure in balls at sub-wavelength scales.

\begin{theorem}[Remez-type inequality for eigenfunctions in small scales, Corollary 2.5 in \cite{cha-man}]\label{cor:remez-eigen}
Let $(M, g)$ be a closed Riemannian manifold. Let $\phi_\lambda$ satisfy $-\Delta_g \phi_\lambda = \lambda \phi_\lambda$ in a chart $U_x$. There exist $\lambda_0(g)$ and $C_R(g)$ such that if $\lambda > \lambda_0(g)$ and $r<\lambda^{-1/2}$, then for any $y \in B_{r_0/2}$ and measurable set $E \subset B_r(y)$, we have the following inequality:

 \begin{equation}\label{eq:remez}
 \sup_{B_r(y)} |\phi_\lambda|\leq C_R\sup_E |\phi_\lambda| \left(C_R\frac{\mu_d(B_r(y))}{\mu_d(E)}\right)^{C_R N\left(\phi_\lambda, B_r(y)\right)+C_R}\ .
 \end{equation}

 \end{theorem}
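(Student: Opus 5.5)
The plan is to reduce the statement to the Remez inequality of Logunov and Malinnikova \cite{logu-mali-icm} for solutions of divergence-form elliptic equations without zero-order term. Their inequality reads: if $Lu=0$ in $2B\subset\mathbb R^{n}$, where $L$ is uniformly elliptic with coefficients that are uniformly bounded and Lipschitz, then for every measurable $E\subset B$,
$$\sup_B|u|\le C\sup_E|u|\Bigl(C\tfrac{\mu_n(B)}{\mu_n(E)}\Bigr)^{C\mathcal N(u,B)+C}.$$
Here $C$ depends only on the ellipticity and regularity constants. The point is that at scales $r<\lambda^{-1/2}$ the eigenvalue term becomes a harmless perturbation, and we remove it entirely by adding one variable.

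\emph{Step 1: rescale to the unit scale.} In the chart $U_x$, with $y\in B_{r_0/2}$ and $r<\lambda^{-1/2}$, set $u(z):=\phi_\lambda(y+rz)$ for $|z|<2$. Since $r\le r_0/4$ once $\lambda>\lambda_0(g)$, the ball $B_{2r}(y)$ stays inside $B_{r_0}$ and $u$ is defined there. The rescaled metric $g_r(z):=g(y+rz)$ has ellipticity constants and Lipschitz norm bounded in terms of $g$ only, uniformly in $r\le 1$. Moreover $u$ satisfies $\Delta_{g_r}u=\lambda r^2u$ with $0\le\lambda r^2<1$. Doubling indices and measure ratios are invariant under this scaling, so it suffices to prove the inequality for $u$ on $B_1$.

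\emph{Step 2: lift to a harmonic-type function.} Put $s:=\sqrt{\lambda}\,r\in[0,1)$ and define $h(z,t):=u(z)e^{st}$ on $B_2\times(-2,2)\subset\mathbb R^{d+1}$. Then $h$ is annihilated by $\Delta_{g_r}-\partial_t^2$, which is a divergence-form operator in $d+1$ variables with no zero-order term. Its constants are uniform in $\lambda$ and $r$, since the product metric $g_r\oplus dt^2$ has constants depending only on $g$ and the formula $\Delta(ue^{st})=(\lambda r^2-s^2)ue^{st}=0$ does not involve $s$ otherwise. Because $s<1$, the factor $e^{st}$ lies between $e^{-2}$ and $e^{2}$ on this cylinder. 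Hence $\sup|h|$ is comparable to $\sup|u|$ over corresponding sets, up to a factor $e^{4}$.

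\emph{Step 3: apply the Remez inequality and translate back.} Let $Q:=B_1\times(-1,1)$ and $F:=E'\times(-1,1)$, where $E'$ is the rescaled copy of $E$. Then $\mu_{d+1}(Q)/\mu_{d+1}(F)=\mu_d(B_1)/\mu_d(E')$, and $\sup_F|h|\le e\sup_{E'}|u|$. Apply the Remez inequality in the ball of $\mathbb R^{d+1}$ of radius $\sqrt2$ circumscribing $Q$. We have
$$\mathcal N(h,B^{d+1}_{\sqrt2})\le \mathcal N(u,B_1)+C,$$
because $B^{d+1}_{2\sqrt2}$ projects into $B_{2\sqrt2}$, and $\sup_{B_{2\sqrt 2}}|u|\le C\sup_{B_1}|u|\cdot2^{C\mathcal N(u,B_1)}$ by the standard almost-monotonicity of the doubling index for solutions of such equations at unit scale. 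Combining this with $\sup_{B_1}|u|\le e\sup_Q|h|$ and absorbing constants into $C_R$ gives \eqref{eq:remez}.

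\emph{Main obstacle.} The delicate point is Step 3, and specifically two things in it. First, matching the geometry of cylinders with balls: we need $F\subset B^{d+1}_{\sqrt2}$ and a measure ratio that only changes by a dimensional constant. Second, comparing doubling indices of $h$ and $u$ across the slightly different radii ($1$ versus $\sqrt2$ versus $2\sqrt2$). For the second issue I would first try the almost-monotonicity of the frequency for elliptic equations with Lipschitz coefficients: it controls $\mathcal N(u,B_{\sqrt2})$ by $C\mathcal N(u,B_1)+C$. Only a constant multiple is needed here, since this is all absorbed into the exponent $C_RN+C_R$.
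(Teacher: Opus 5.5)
Your Steps 1 and 2 are fine. The overall route (rescale to unit scale, lift to $h=ue^{st}$, apply the Logunov--Malinnikova Remez inequality) is the natural one and is consistent with how the paper treats the statement. There is, however, a genuine gap in Step 3.

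\textbf{The gap.} Circumscribing $Q$ by $B^{d+1}_{\sqrt2}$ puts $\mathcal N(h,B^{d+1}_{\sqrt2})$ in the exponent. This quantity involves $\sup_{B^{d+1}_{2\sqrt2}}|h|$, hence $\sup_{B_{2\sqrt2}}|u|$, whereas $\mathcal N(u,B_1)$ only sees $B_2$. Your bound $\sup_{B_{2\sqrt2}}|u|\le C\sup_{B_1}|u|\,2^{C\mathcal N(u,B_1)}$ is false. Almost-monotonicity of the frequency bounds the doubling index of smaller balls by that of larger ones. It never controls growth outside $B_2$ by growth inside it.

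For example, $u(z)=z_1+2^{-n}\mathrm{Re}(z_1+iz_2)^n$ is harmonic with $\sup_{B_1}|u|\ge 1$ and $\sup_{B_2}|u|\le 3$. So $\mathcal N(u,B_1)\le\log_2 3$, while $\sup_{B_{2\sqrt2}}|u|\ge 2^{n/2}$. Solutions of $\Delta u=s^2u$ tend to harmonic functions as $s\to0$, and rescaled spherical harmonics of degree $k\gg n$ near a pole reproduce the same picture. Hence no constant uniform in $\lambda$ exists.

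As written, your argument only yields the exponent $C\log_2\bigl(\sup_{B_{2\sqrt2 r}(y)}|\phi_\lambda|/\sup_{B_r(y)}|\phi_\lambda|\bigr)+C$. This is weaker than \eqref{eq:remez}. The application in Lemma~\ref{lemma:growth} needs precisely the growth from $b_{j(n)}$ into $2b_{j(n)}$.

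\textbf{The repair.} Use the ball inscribed in the cylinder rather than the circumscribed one. Apply the Remez inequality on $\widetilde B:=B^{d+1}_1$, with $F:=(E'\times\mathbb R)\cap\widetilde B$. Its double $B^{d+1}_2$ lies in $B_2\times(-2,2)$, so you also no longer need to extend $u$ beyond $B_2$. Then:
\begin{itemize}
\item $\sup_{\widetilde B}|h|\ge\sup_{B_1}|u|$, using the slice $t=0$;
\item $\sup_F|h|\le e\sup_{E'}|u|$;
\item $\mathcal N(h,\widetilde B)\le\mathcal N(u,B_1)+2\log_2 e$, since $\sup_{B^{d+1}_2}|h|\le e^2\sup_{B_2}|u|$.
\end{itemize}

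What you lose is exact preservation of the measure ratio, because $F$ is thin above points of $E'$ near $\partial B_1$. But
$\mu_{d+1}(F)=\int_{E'}2\sqrt{1-|z|^2}\,dz\ge c_d\,\mu_d(E')^{3/2}$.
Indeed, the shell where $2\sqrt{1-|z|^2}<\tau$ has $\mu_d$-measure at most $C_d\tau^2$. Taking $C_d\tau^2=\mu_d(E')/2$ leaves half of $E'$ where the height is at least $\tau$.

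Hence $\mu_{d+1}(\widetilde B)/\mu_{d+1}(F)\le C\bigl(\mu_d(B_1)/\mu_d(E')\bigr)^{3/2}$. The power $3/2$ is absorbed into $C_R$, and you obtain \eqref{eq:remez} with $\mathcal N(\phi_\lambda,B_r(y))$ in the exponent, as required.
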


Now, let 
$$L = a^{ij}\partial_i\partial_j+b^i\partial_i $$
be a uniformly elliptic operator of second order in the  unit ball.
Assume that 
$$\|a^{ij}\|_{C^2(B_1)}+\|b^i\|_{C^1(B_1)}\leq K$$
and that 
$$\forall \xi\in\Rb^d\quad a^{ij}\xi_i\xi_j \geq \kappa |\xi|^2$$
for some $K,\kappa>0$. We have the following estimates:
\begin{theorem}[Elliptic estimates {\cite{gil-tru}*{Theorems 3.1 and 6.2}}]
\label{thm:gradient}
Let $h$ satisfy $Lh=0$ in $B_1$. There exist  $C(K_1,\kappa)$ and $r_0(K,\kappa)$ such that for any $r < r_0$,

\begin{align}
    \sup\limits_{B_r} |h| &\leq \sup\limits_{\partial B_r} |h| \, ,\label{eq:maxprinciple1} \\
    |\nabla h(0)| &\leq \frac{C}{r}\sup_{B_r} |h|\ . \label{eq:boundsderivatives1}
\end{align}

\end{theorem}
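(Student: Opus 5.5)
The plan is to derive both inequalities from the classical theory for operators without zeroth-order term: the weak maximum principle for \eqref{eq:maxprinciple1}, and interior Schauder estimates combined with a rescaling argument for \eqref{eq:boundsderivatives1}.

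\textbf{Maximum principle.} Since $L = a^{ij}\partial_i\partial_j + b^i\partial_i$ has no zeroth-order term and is uniformly elliptic, the weak maximum principle of \cite{gil-tru}*{Theorem 3.1} applies on any ball $B_r \subset B_1$. It needs $Lu \ge 0$, $u \in C^2(B_r)\cap C^0(\overline{B_r})$, and $|b^i|/\kappa$ bounded, which holds because $\|b^i\|_{C^1}\le K$. It gives $\sup_{B_r} u \le \sup_{\partial B_r} u$. Apply it to $u=h$ and to $u=-h$, both of which satisfy $Lu=0$. Combining the two gives $\sup_{B_r}|h| \le \sup_{\partial B_r}|h|$. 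This step does not need $r<r_0$ beyond $B_r\subset B_1$.

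\textbf{Gradient bound by rescaling.} Set $h_r(z) := h(rz)$ for $z\in B_1$. Then $h_r$ solves $L_r h_r = 0$, where
$$L_r = a^{ij}(rz)\,\partial_i\partial_j + r\, b^i(rz)\,\partial_i .$$
The ellipticity constant of $L_r$ is still $\kappa$. For $r<r_0\le 1$ the rescaled coefficients satisfy
$$\|a^{ij}(r\cdot)\|_{C^{0,\alpha}(B_1)} + \|r b^i(r\cdot)\|_{C^{0,\alpha}(B_1)} \le C(K),$$
because the H\"older seminorms only improve under this scaling: $[f(r\cdot)]_\alpha = r^\alpha [f]_\alpha$, and $r\le1$. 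The interior Schauder estimate \cite{gil-tru}*{Theorem 6.2} on $B_{1/2}\Subset B_1$ then gives
$$\|h_r\|_{C^{2,\alpha}(B_{1/2})} \le C(K,\kappa)\sup_{B_1}|h_r|,$$
so in particular $|\nabla h_r(0)| \le C \sup_{B_1}|h_r|$. Undoing the scaling, $\nabla h_r(0) = r\nabla h(0)$ and $\sup_{B_1}|h_r| = \sup_{B_r}|h|$, which yields $|\nabla h(0)|\le \frac{C}{r}\sup_{B_r}|h|$.

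\textbf{Main obstacle.} The only point needing care is that the constant must be uniform in $r$. This requires checking that the rescaled family $L_r$ stays in a fixed class with constants depending only on $K$ and $\kappa$, which is the coefficient bound above.

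- Choosing $r_0\le 1$ guarantees that the first-order coefficients shrink rather than grow under rescaling.
- $r_0$ can also be taken small enough that $B_{r}\subset B_1$ with room to spare, so the Schauder constant is applied on a fixed pair of domains.

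Schauder theory needs the solution to be $C^2$, or else a $W^{2,p}$ solution together with a bootstrap. For eigenfunctions in charts, which is the intended application, $h$ is smooth, so this is not an issue.
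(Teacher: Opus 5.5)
Your proof is correct and is essentially the paper's argument, which consists only of citing the weak maximum principle (Gilbarg--Trudinger, Theorem 3.1, applied to $h$ and $-h$) and the interior Schauder estimate (Theorem 6.2). Your rescaling $h_r(z)=h(rz)$, together with your check that the coefficients of $L_r$ stay controlled by $K$ and $\kappa$ for $r\le 1$, is an explicit version of the factor $1/r$ that Theorem 6.2 already encodes through its distance-weighted interior norms on $B_r$.
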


If we apply estimates \eqref{eq:maxprinciple1} and \eqref{eq:boundsderivatives1} to the function $h_\lambda(x,y) = \phi_\lambda(x)e^{\sqrt{\lambda}y}$, the compactness of $M$ implies the following uniform estimates at sub-wavelength scales:

\begin{corollary}\label{cor:ellipticestimates}
    Let $(M,g)$ be a closed smooth manifold. If $\Delta_g \phi_\lambda = \lambda \phi_\lambda$ on $U_x$, then there exist constants $C_1(g), C_2(g)$ and $\lambda_0(g)$ such that if $\lambda > \lambda_0$, $r \leq C_1 \lambda^{-1/2}$ and $y \in B_\frac{r_0}{2}$ we have the following inequalities:

\begin{equation}\label{eq:maxprinciple}
    \sup\limits_{B_r(y)} |\phi_\lambda| \leq C_2 \sup\limits_{\partial B_r(y)} |\phi_\lambda| \, ,
\end{equation}

\begin{equation}\label{eq:boundsderivatives}
    |\nabla \phi_\lambda(y)|\leq \frac{C_2}{r}\sup_{B_r(y)} |\phi_\lambda|\ .
\end{equation}

\end{corollary}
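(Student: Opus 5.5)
The plan is to reduce both inequalities to Theorem \ref{thm:gradient} by rescaling to the wavelength scale, using compactness of $M$ to make all constants uniform. In the chart $U_x$, write $-\Delta_g = g^{ij}\partial_i\partial_j + b^i\partial_i$, where $g^{ij}, b^i$ depend smoothly on the metric. Since $M$ is compact and $y\in B_{r_0/2}$, their $C^2$ and $C^1$ norms on $B_{r_0}$, and the ellipticity constant, are bounded uniformly in $x$ and $y$.

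\textbf{Rescaling and the derivative bound.} Set $h_\lambda(z,t)=\phi_\lambda(z)e^{\sqrt\lambda t}$ on $U_x\times\mathbb R$. It solves the $(d+1)$-dimensional equation $(g^{ij}\partial_i\partial_j+b^i\partial_i+\partial_t^2)h_\lambda=0$, because the $\partial_t^2$ term contributes $+\lambda h_\lambda$ and cancels the eigenvalue. Rescale by $\rho=\lambda^{-1/2}$:
\[
H(z',t')=h_\lambda(y+\rho z',\rho t').
\]
After multiplying the equation by $\rho^2$, $H$ solves $LH=0$ in the unit ball of $\mathbb R^{d+1}$. The new coefficients are $a^{ij}(y+\rho z')$, with $C^2$ norm bounded by that of $g^{ij}$ since $\rho\le1$, together with $\rho b^i(y+\rho z')$. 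The ellipticity constant is unchanged. Hence $K,\kappa$ depend only on $g$, and this is where $\lambda>\lambda_0$ is used, so that $B_\rho(y)\subset B_{r_0}$. Choose $C_1=r_0(K,\kappa)/2$. Applying \eqref{eq:boundsderivatives1} to $H$ at radius $r/\rho$ and scaling back gives
\[
|\nabla\phi_\lambda(y)|\le |\nabla h_\lambda(y,0)|\le \frac Cr\sup_{B^{d+1}_r(y,0)}|h_\lambda|\le \frac{C e^{\sqrt\lambda r}}{r}\sup_{B_r(y)}|\phi_\lambda|.
\]
Since $\sqrt\lambda r\le C_1$, the factor $e^{\sqrt\lambda r}$ is absorbed into $C_2$. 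This proves \eqref{eq:boundsderivatives}.

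\textbf{The maximum principle \eqref{eq:maxprinciple}.} This is the delicate step, because the boundary of a $(d+1)$-ball projects onto the whole $d$-ball rather than onto $\partial B_r(y)$. Applying \eqref{eq:maxprinciple1} directly to $h_\lambda$ therefore does not isolate $\sup_{\partial B_r(y)}|\phi_\lambda|$. Instead I would use a positive multiplier, which plays the role of the $e^{\sqrt\lambda t}$ factor. In the rescaled picture $\phi$ satisfies $(L+1)\phi=0$, whose zero-order term has the wrong sign. Take
\[
w(z')=\cos(\alpha |z'|)
\]
with $\alpha$ a fixed constant depending on $K,\kappa$. On $B_{2C_1}$, for $C_1$ small, we have $w\ge 1/2$ and $(L+1)w\le 0$: the second-order term gives about $-\alpha^2\kappa$, which dominates $1+C\alpha$ once $\alpha$ is large and $C_1\ll 1/\alpha$.

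Then $v=\phi/w$ satisfies $\tilde L v + \frac{(L+1)w}{w}\,v=0$, where $\tilde L=L+2a^{ij}\frac{\partial_iw}{w}\partial_j$ and the zero-order coefficient is $\le0$. The classical weak maximum principle (\cite{gil-tru}, Theorem 3.1 / Corollary 3.2) therefore gives $\sup_{B}|v|\le\sup_{\partial B}|v|$. Consequently
\[
\sup_{B_r(y)}|\phi_\lambda|\le \sup_{B_r(y)}|v|\le \sup_{\partial B_r(y)}|v|\le 2\sup_{\partial B_r(y)}|\phi_\lambda|,
\]
which yields \eqref{eq:maxprinciple} with $C_2=2$ after enlarging $C_2$.

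The main obstacle is verifying $(L+1)w\le0$ uniformly, including near $z'=0$ where $|z'|$ is not smooth. I would check it by direct computation using $\cos(\alpha|z|)=1-\alpha^2|z|^2/2+\dots$, which is smooth, and take $C_1$ small in terms of $K,\kappa$ only.
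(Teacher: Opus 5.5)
Your proof is correct. For the gradient bound \eqref{eq:boundsderivatives} it is the paper's argument made explicit: lift to $h_\lambda=\phi_\lambda e^{\sqrt\lambda t}$, rescale by $\lambda^{-1/2}$ so that $K,\kappa$ depend only on $g$, apply \eqref{eq:boundsderivatives1}, and absorb $e^{\sqrt\lambda r}\le e^{C_1}$ into $C_2$.

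For \eqref{eq:maxprinciple} you take a genuinely different route. The paper claims both inequalities follow from applying Theorem \ref{thm:gradient} to $h_\lambda$. As you observe, this does not work for the maximum principle. On the boundary of a $(d+1)$-ball, $h_\lambda$ takes the values $\phi_\lambda(x)e^{\pm\sqrt\lambda\sqrt{r^2-|x-y|^2}}$ with $x$ ranging over all of $\overline{B_r(y)}$. So the bound only controls $\phi_\lambda$ by its own values inside the ball and cannot isolate $\sup_{\partial B_r(y)}|\phi_\lambda|$. The lift really only delivers the gradient estimate.

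Your replacement, the generalized maximum principle with the positive supersolution $w=\cos(\alpha|z'|)$, checks out:
\begin{itemize}
\item With $\hat z=z'/|z'|$, one has $D^2w=-\alpha^2\cos(\alpha|z'|)\,\hat z\otimes\hat z-\alpha\frac{\sin(\alpha|z'|)}{|z'|}(I-\hat z\otimes\hat z)$. Both pieces are negative semidefinite for $\alpha|z'|\le\pi/2$.
\item Hence $(L+1)w\le-\alpha^2\kappa/2+K\alpha+1<0$ on $B_{\pi/(3\alpha)}$ once $\alpha=\alpha(K,\kappa)$ is large.
\item $w$ is smooth at the origin, since it is an entire function of $|z'|^2$. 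So the point you flagged as the main obstacle is not actually an issue.
\item The new drift $2a^{ij}\partial_jw/w$ is bounded because $w\ge 1/2$. Therefore the weak maximum principle for nonpositive zero-order coefficient (Gilbarg--Trudinger, Corollary 3.2) applies to $v=\phi/w$.
\end{itemize}

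Compared with the paper's one-line justification, your route gives a complete argument with the explicit constant $C_2=2$. It also makes transparent that the restriction $r\le C_1\lambda^{-1/2}$ is exactly what lets a positive supersolution exist, i.e.\ it keeps $\lambda$ below the first Dirichlet eigenvalue of $B_r(y)$.

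One bookkeeping point: take $C_1$ to be the minimum of $r_0(K,\kappa)/2$ and $\pi/(3\alpha)$, so that both parts hold with the same constant.
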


\section{Estimating the capacity of certain sets using rearrangements}\label{sec:capacity}

We start by proving the following important lemma on the total capacity of a set which has a large intersection with many  concentric annuli of small width:

\begin{lemma}\label{lemma:additivityofcapacity}
There are constants $c(d)$ and $N_0(d)$ with the following property: Let  $K$ be  a compact subset of $\overline{B_1}$. Assume that $N > N_0$ and that there exists a sequence $2N+1 \leq n_1 < n_2 \ldots \leq n_{N} < 4N$ and balls $b_{j}$, $1 \leq j \leq N$ such that 

\begin{enumerate}
    \item $\text{diam} (b_{j}) = 1/(4N)$, 
    \item $b_j \subset B_{\frac{n_j}{4N}} \backslash B_{\frac{n_j-1}{4N}}$ and
    \item $\mu_d(K \cap b_j) \geq \frac{1}{2}\mu_d(b_j)$.
\end{enumerate}

Then,

\begin{align}
    \text{Cap}(K) &> \frac{c}{\log N} \quad &  \text{if} \quad  d=3,\\
    \text{Cap}(K) &> \frac{c}{N^{d-3}} \quad & \text{if} \quad d\geq 4.
\end{align}
\end{lemma}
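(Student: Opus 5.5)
The plan is to reduce $K$, through two rearrangements, to a set that contains a thin cylinder, and then apply Lemma \ref{le:capacityofcylinders}.

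\emph{Step 1: spherical cap rearrangement.} By Theorem \ref{theorem:monotonicityrearrangements}, it suffices to bound $\text{Cap}(Sc(K))$ from below. Fix $j$ and consider a radius $r \in [\frac{n_j-1}{4N}, \frac{n_j}{4N}]$, with $r \ge 1/2$ because $n_j \ge 2N+1$. Hypothesis (3) says $K\cap b_j$ fills half of a ball of diameter $1/(4N)$. By Fubini in polar coordinates, a set $R_j$ of radii of measure $\ge c/N$ in this interval satisfies
\[
\delta_K(r)\ge \mu_{d-1}(K\cap b_j\cap \partial B_r)\ge cN^{-(d-1)}.
\]
On each such sphere $Sc(K)$ contains a cap around $(r,0,\dots,0)$ whose geodesic radius is $\ge c/N$. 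Since $r\in[1/2,1]$, this cap contains a $(d-1)$-disc of Euclidean radius $\ge c'/N$ centered on the $x_1$-axis. So $Sc(K)$ contains the union over $r\in\bigcup_j R_j$ of these caps, a ``comb'' along the segment $[1/2,1]$ of the $x_1$-axis. The caps bend slightly, but their curvature is $O(1)$ and their size is $O(1/N)$, so the bending is negligible.

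\emph{Step 2: Steiner symmetrization to an actual cylinder.} The set $\bigcup_j R_j$ has total length $\ge cN\cdot (1/N)=c$, but it is not an interval. I would apply Steiner symmetrization with respect to the hyperplane $H=\{x_1 = 3/4\}$, i.e.\ along lines parallel to the $x_1$-axis. For every line $l_x$ with $x$ at distance $\le c'/(2N)$ from the axis, the intersection $l_x\cap Sc(K)$ has length $\ge \mu_1(\bigcup_j R_j)\ge c_0$, up to curvature corrections. After symmetrization, the set therefore contains the cylinder $D_{c_0,\,c'/(2N),\,d}$ suitably placed. Theorem \ref{theorem:monotonicityrearrangements} then gives $\text{Cap}(K)\ge \text{Cap}(St_H(Sc(K)))\ge \text{Cap}(D_{c_0, c/N, d})$. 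Monotonicity of capacity under inclusion is immediate from the definition.

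\emph{Step 3: apply the cylinder lemma.} For $N>N_0$ we have $L/R = c_0N/c > c_d'$, so Lemma \ref{le:capacityofcylinders} gives $\text{Cap}\ge c/\log(CN)\ge c/\log N$ for $d=3$, and $\text{Cap}\ge c\,N^{-(d-3)}$ for $d\ge4$. The strict inequality is obtained by shrinking $c$.

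\emph{Main obstacle.} The delicate point is Step 1: converting half-volume occupancy of $b_j$ into caps of radius $\gtrsim 1/N$ on a set of radii of length $\gtrsim 1/N$, uniformly in $j$. I would do this by writing $\mu_d(K\cap b_j)=\int \delta_{K\cap b_j}(r)\,dr$. The bound $\delta\le CN^{-(d-1)}$ holds for every $r$, so a Chebyshev-type argument forces $\delta\ge cN^{-(d-1)}$ on a set of measure $\ge c/N$. A secondary technical point is checking that the caps, after the tiny curvature correction, contain a common straight tube, so that the Steiner step genuinely produces lines of length $\ge c_0$. If this is awkward, an alternative is to replace each cap by the inscribed flat disc and shrink the radius by a constant factor.
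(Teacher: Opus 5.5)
Your proposal is correct and follows the paper's proof: the spherical cap rearrangement yields caps of area $\gtrsim N^{1-d}$ for a set of radii in $(1/2,1)$ of measure $\gtrsim 1$. Steiner symmetrization along the $x_1$-direction then gives a cylinder of length $c$ and radius $c/N$, and you finish with Theorem \ref{theorem:monotonicityrearrangements} and Lemma \ref{le:capacityofcylinders}; using $\{x_1=3/4\}$ rather than $\{x_1=1/2\}$ changes nothing.

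Your Chebyshev/Fubini argument supplies a step the paper only asserts. The curvature concern in Step 2 is harmless: a line parallel to the axis at distance $\rho\lesssim 1/N$ meets the cap on $\partial B_r$ at $x_1=\sqrt{r^2-\rho^2}$, and since $r\mapsto\sqrt{r^2-\rho^2}$ is expanding, no length is lost.
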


\begin{proof}

Since the balls $b_j$ are disjoint, for some small constant $c(d)$ we have the following inequality:

\begin{equation}
    \mu_1 \bigg( \left\{ r \in (1/2,1) \quad | \quad \mu_{d-1}(K \cap \partial(B_r))  \geq  c N^{1-d}\right\}\bigg) \geq c.
\end{equation}

Let $Sc(K)$ be the spherical cap rearrangement of $K$, as defined in section \ref{subsection:rearrangements}. Since $N$ is large, we have the following inequality:

\begin{equation}\label{measureofbigsets}
    \mu_1\bigg({\left\{ r \in (1/2,1) \quad | \quad \mu_{d-1}(Sc(K) \cap \{x_1 = r \})  \geq  c N^{1-d}\right\}}\bigg) \geq c.
\end{equation}

Now, let $St_{H}(Sc(K))$ be the Steiner symmetrization of $Sc(\Omega)$ with respect to the hyperplane \linebreak $H:=\{x_1 = 1/2\}$ (see Figure \ref{picture:rearrangements}).

By inequality \eqref{measureofbigsets}, the set $St_{H}(Sc(K))$ contains a cylinder of length $c$ and radius $c/N$. 

By monotonicity of Newtonian capacity with respect to rearrangements (Theorem \ref{theorem:monotonicityrearrangements}), we have the following inequalities:

\begin{align}
    \text{Cap}(K) &\geq \text{Cap}(Sc(\Omega))\, ,\nonumber\\
    &\geq \text{Cap}(St_H(Sc(K))) \, ,\nonumber\\
    &\geq \text{Cap}(D_{c, \,c/N,\, d}) \, .
\end{align}
If we take $N_0(d)$ large enough and assume that $N > N_0$, applying Lemma \ref{le:capacityofcylinders} completes the proof of Lemma \ref{lemma:additivityofcapacity}.

\end{proof}

This leads to the following obvious corollary:

\begin{corollary}\label{cor:totalcapacityonannuli}
There are constants $C(d)$ and $\eta_0(d)$ such that the following holds:
    Assume that $r>0$, $\eta < \eta_0(d)$ and that for some compact set $K \subset \overline{B_r}$, $\text{Cap}(K) < \eta \, Cap(B_r)$. Choose $N=N(\eta)$ such that $N \leq e^{C(d)/\eta}$ (if $d=3$) or $N \leq C(d)\eta^{3-d}$ (if $d \geq 4$). Let us be given a collection of balls $b_j$, $j=2N+1 \ldots 4N$ of radius $r/(4N)$ such that $b_j \subset B_{\frac{rj}{4N}} \backslash B_{\frac{r(j-1)}{4N}}$.
    
    Then, there is $J \geq N$ and a sequence $2N+1 \leq j(1) < j(2) \ldots < j(J) \leq 4N$ such that \linebreak $\mu_d\left( K \cap b_{j(n)}\right) <  \mu_d(b_{j(n)})/2$.
    
\end{corollary}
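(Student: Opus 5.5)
The plan is to argue by contradiction, after rescaling to the unit ball and applying Lemma \ref{lemma:additivityofcapacity} to the family of annular balls on which $K$ is dense.

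\textbf{Step 1: rescale.} Set $K' := K/r \subset \overline{B_1}$. By the scaling property \eqref{eq:scalingcapacity},
\[
\text{Cap}(K') = r^{2-d}\,\text{Cap}(K) < \eta\, r^{2-d}\,\text{Cap}(B_r) = \eta\, \text{Cap}(B_1).
\]
The rescaled balls $b_j' := b_j/r$ have radius $1/(4N)$ and satisfy $b_j' \subset B_{\frac{j}{4N}} \backslash B_{\frac{j-1}{4N}}$. This is exactly the annular configuration of Lemma \ref{lemma:additivityofcapacity}. (There is a small mismatch between the lemma's normalisation $\text{diam}(b_j)=1/(4N)$ and radius $1/(4N)$ here; it is harmless. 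Either shrink each ball, or note that the lemma's proof only uses disjointness and a measure lower bound of order $N^{-d}$ in each annulus, so it tolerates constant changes.)

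\textbf{Step 2: count bad indices.} Suppose the conclusion fails. Call an index $j \in \{2N+1,\dots,4N\}$ bad if $\mu_d(K'\cap b_j') \ge \tfrac12 \mu_d(b_j')$. Measure ratios are scale invariant, so bad indices for $K'$ are the same as for $K$. The good indices number fewer than $N$. Since there are $2N$ indices in total, more than $N$ are bad. Pick $N$ of them, $2N+1 \le n_1 < \dots < n_N \le 4N$.

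\textbf{Step 3: apply the lemma.} Provided $N > N_0(d)$, Lemma \ref{lemma:additivityofcapacity} gives
\[
\text{Cap}(K') > \frac{c}{\log N} \quad (d=3), \qquad \text{Cap}(K') > c\,N^{3-d} \quad (d\ge4).
\]

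\textbf{Step 4: choose constants to reach a contradiction.} Write $\text{Cap}(B_1) = \kappa_d$. For $d=3$, the constraint $N \le e^{C/\eta}$ gives $\log N \le C/\eta$, so $c/\log N \ge c\eta/C$. Taking $C(d) \le c/\kappa_d$ yields $\text{Cap}(K') \ge \eta\,\kappa_d$, which contradicts Step 1. For $d\ge4$, the constraint $N \le C\eta^{3-d}$ gives $N^{3-d} \ge C^{3-d}\eta$, so $cN^{3-d} \ge c\,C^{3-d}\eta$. Since $3-d<0$, choosing $C(d)$ small enough that $c\,C^{3-d} \ge \kappa_d$ again gives $\text{Cap}(K') \ge \eta\,\kappa_d$, a contradiction.

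\textbf{Main obstacle.} The delicate point is ensuring $N > N_0$, which the lemma requires. The statement only bounds $N$ from above, so the plan is to read the choice of $N$ as $N$ being of the order of the bound, for instance $N = \lfloor e^{C/\eta}\rfloor$. Then $\eta_0(d)$ is taken small enough that $\eta<\eta_0$ forces this $N$ to exceed $N_0(d)$. If a smaller $N$ is genuinely intended, I would instead work with the annuli at scale $N' \ge N_0$. I expect bookkeeping of this kind is what the word "obvious" covers. The remaining steps are just matching constants.
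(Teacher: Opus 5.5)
Your route is the one the paper intends. It gives no proof beyond calling this an ``obvious corollary'' of Lemma \ref{lemma:additivityofcapacity}: rescale, pigeonhole to get $N$ indices where $K$ is dense, apply the lemma, match constants. This works for $d=3$ and $d=4$.

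The gap is Step 4 for $d\ge 5$. Raising $N\le C\eta^{3-d}$ to the negative power $3-d$ gives $N^{3-d}\ge C^{3-d}\eta^{(d-3)^2}$, not $N^{3-d}\ge C^{3-d}\eta$. The inequality you use is equivalent to $N\le C\eta^{-1/(d-3)}$, which is a much stronger restriction once $d\ge 5$. Under your own reading, $N$ of order $C\eta^{3-d}$, the lemma only gives $\text{Cap}(K')>c\,C^{3-d}\eta^{(d-3)^2}$. For small $\eta$ this is far below $\eta\,\text{Cap}(B_1)$, so there is no contradiction.

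No choice of constants repairs this, because the statement as written is false for $d\ge 5$. Read the balls as having diameter $r/(4N)$, as in Lemma \ref{lemma:additivityofcapacity}. A ball of radius $r/(4N)$ cannot fit in an annulus of width $r/(4N)$, so that normalisation is a typo rather than a harmless mismatch. Shrinking the balls would not help either, since it does not preserve the density condition. Now let $K$ be the union of all $2N$ closed balls $\overline{b_j}$. By subadditivity, $\text{Cap}(K)\le 2N\,\text{Cap}(b_j)=2\cdot 8^{2-d}N^{3-d}\,\text{Cap}(B_r)$. This is $<\eta\,\text{Cap}(B_r)$ when $N\approx C\eta^{3-d}$ and $\eta$ is small, yet no index is good, so $J=0$.

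The correct hypothesis is $N\le C(d)\,\eta^{1/(3-d)}$, and this is what the paper actually uses later. The growth bound $e^{c\delta^{2/(3-d)}}$ in the proof of Lemma \ref{lemma:estimatearoundmaximum}, which yields the exponent $\frac{3-d}{2}$ in Theorem \ref{theorem:maintheorem}, corresponds to $N\sim\eta^{1/(3-d)}$ with $\eta\sim\delta^2$. Under that hypothesis your Step 4 goes through verbatim.

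A minor point: your fallback of working with ``annuli at scale $N'\ge N_0$'' does not make sense, since the balls are given at scale $N$. If $N\le N_0$, a single dense ball already forces $\text{Cap}(K)\ge c(d)\,r^{d-2}$, by the isoperimetric inequality for capacity. That is impossible once $\eta_0$ is small, so in this case all $2N$ indices are good.
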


\section{Growth of $\phi_\lambda$ in small balls if $\Omega_\lambda^c$ has small capacity}

We will now look specifically at eigenfunctions at sub-wavelength scales. For any $x \in M$, we assume that we are working in a chart $U_x \subset \mathbb R^d$ as defined in section \ref{subsection:notation}.

\begin{lemma}\label{lemma:inductive}
There are constants $\delta_0(g)$,  $\eta_0(d)$, $\lambda_0(g)$, $L(g)$ and $c(g)$  with the following property:
Let $\delta < \delta_0$, $\eta < \eta_0$, $\lambda > \lambda_0$ and $\phi_{\lambda}$ be a solution to $\Delta_{g} \phi_\lambda= \lambda \phi_\lambda$ on $U_{x}$.
Assume that $\Omega$ is open subset of  ${B_{\frac{\delta}{\sqrt{\lambda}}}}$ and that $\text{Cap}(\Omega^c \cap B_{\frac{\delta}{\sqrt{\lambda}}}) < \eta \,  Cap(B_{\frac{\delta}{\sqrt{\lambda}}})$.  Choose $N(\eta)$ as in Corollary \ref{cor:totalcapacityonannuli} and choose balls $b_j$ of radius $\frac{\delta}{4N\sqrt{\lambda}}$, $j=2N+1 \ldots 4N$, such that
$b_j \subset B_{\frac{\delta j}{4N\sqrt{\lambda}}} \backslash B_{\frac{\delta (j-1)}{4N\sqrt{\lambda}}}$ and 
\begin{equation}\label{eq:ballsmaxchoice}
    \sup\limits_{b_{j}} |\phi_{\lambda}| = \sup\limits_{\partial B_{\frac{\delta j}{4N\sqrt{\lambda}}}} |\phi_{\lambda}|\,.
\end{equation}

Now, choose $J \geq N$ and $j(n)$, $n=1 \ldots J$ as in Corollary \ref{cor:totalcapacityonannuli}. 
    Assume that $\sup\limits_\Omega |\phi_{\lambda}|\leq 1$ and that for some $1 \leq n \leq J-1$, $\sup\limits_{B_{\frac{j(n)}{4N}}} |\phi_{\lambda}|> L$. Then,  
    
    \begin{equation}
        \sup\limits_{B_{\frac{\delta j(n+1)}{4N\sqrt{\lambda}}}} |\phi_{\lambda}|>\sup\limits_{B_{\frac{\delta j(n)}{4N\sqrt{\lambda}}}} |\phi_{\lambda}|^{1+c}\,.
    \end{equation}
\end{lemma}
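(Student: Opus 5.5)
The plan is to use the Remez-type inequality (Theorem \ref{cor:remez-eigen}) on the small ball $b_{j(n+1)}$, with the set $E = b_{j(n+1)} \cap \Omega$. Corollary \ref{cor:totalcapacityonannuli} guarantees $\mu_d(\Omega^c \cap b_{j(n+1)}) < \mu_d(b_{j(n+1)})/2$, so $E$ has at least half the measure of the ball, and $\sup_E|\phi_\lambda|\le 1$. Write $r_n = \frac{\delta j(n)}{4N\sqrt\lambda}$, $M_n = \sup_{B_{r_n}}|\phi_\lambda|$, and $\rho = \frac{\delta}{4N\sqrt\lambda}$ for the radius of the $b_j$. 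By the maximum principle \eqref{eq:maxprinciple} and the choice \eqref{eq:ballsmaxchoice}, we have $\sup_{b_{j(n+1)}}|\phi_\lambda| \ge c\, M_{n+1}$, up to harmless constants. Remez then gives
\[
c\,M_{n+1} \le \sup_{b_{j(n+1)}}|\phi_\lambda| \le C_R (2C_R)^{C_R \mathcal N(\phi_\lambda, b_{j(n+1)}) + C_R},
\]
so $\mathcal N(\phi_\lambda,b_{j(n+1)}) \ge c\log M_{n+1} - C$. In words: the eigenfunction must grow quickly from $b_{j(n+1)}$ to its double $2b_{j(n+1)}$.

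Next I convert this doubling on a tiny ball into growth between consecutive radii $r_n$ and $r_{n+1}$. The ball $2b_{j(n+1)}$ has radius $2\rho$ and is contained in $B_{r_{n+1}+2\rho}$. So the estimate above reads
\[
\sup_{B_{r_{n+1}+2\rho}}|\phi_\lambda| \ge 2^{c\log M_{n+1}-C}\, M_{n+1} = M_{n+1}^{1+c'}\,2^{-C}.
\]
This is not yet the stated index shift. The fix is to apply Remez on $b_{j(n)}$ rather than $b_{j(n+1)}$, which gives $\sup_{2b_{j(n)}}|\phi_\lambda| \ge M_n^{1+c'}2^{-C}$. One then needs $2b_{j(n)}\subset B_{r_{n+1}}$, which requires $r_n+2\rho \le r_{n+1}$, i.e. $j(n+1)\ge j(n)+2$. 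If the selected indices are consecutive, I will either pass to every other index (costing only a factor $2$ in $J\ge N$, which I would absorb by adjusting constants in Corollary \ref{cor:totalcapacityonannuli}), or replace $2b$ by $(1+\tfrac12)b$ and use a Remez/doubling statement at that ratio, which holds with modified constants. So the inequality I aim for is $M_{n+1}\ge 2^{-C}M_n^{1+c'}$.

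Finally, choose $L$ so that $M_n>L$ absorbs the constants: $2^{-C}M_n^{1+c'} \ge M_n^{1+c'/2}$ once $\log M_n \ge 2C/c'$. This yields the claim with $c=c'/2$. The hypothesis $r\le \lambda^{-1/2}$ in Theorem \ref{cor:remez-eigen} holds because $\delta<\delta_0\le 1$. The Donnelly--Fefferman bound is not needed here; it only guarantees finiteness of the doubling index.

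The main obstacle is the lower bound $\sup_{b_{j(n)}}|\phi_\lambda|\gtrsim M_n$. By \eqref{eq:ballsmaxchoice} this supremum equals $\sup_{\partial B_{r_n}}|\phi_\lambda|$, and \eqref{eq:maxprinciple} compares it to $M_n$ with a constant $C_2$. That constant must be controlled uniformly, which is exactly where the sub-wavelength condition $r\le C_1\lambda^{-1/2}$ is needed. The other delicate point is making the exponent bookkeeping in Remez uniform in $N$. This works because the Remez base involves only the measure ratio $\le 2$, so $N$ never enters the constants.
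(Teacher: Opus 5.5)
Your argument is the paper's: the maximum principle \eqref{eq:maxprinciple} with \eqref{eq:ballsmaxchoice} gives $M_n\le C\sup_{b_{j(n)}}|\phi_\lambda|$. Then Remez on $b_{j(n)}$ with $E=b_{j(n)}\cap\Omega$ forces $\mathcal{N}(\phi_\lambda,b_{j(n)})\gtrsim\log M_n$, hence $\sup_{2b_{j(n)}}|\phi_\lambda|\ge (M_n/C)^{1+c_2}$; this is exactly Lemma \ref{lemma:growth}, and $L$ absorbs the constants.

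One correction: your concern about consecutive indices is unfounded, and the ``every other index'' fix would only give growth from $j(n)$ to $j(n+2)$, not the stated claim. Since $b_{j(n)}\subset B_{r_n}$ has radius $s\le\rho$, its center lies within $r_n-s$ of the origin. So $2b_{j(n)}\subset B_{r_n+s}\subseteq B_{r_n+\rho}\subseteq B_{r_{n+1}}$ holds automatically whenever $j(n+1)\ge j(n)+1$.
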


\begin{remark}
    This Lemma allows us to estimate the growth of $|\phi_\lambda|$ in terms of the intersection of $\Omega_\lambda^c$ with a sequence of concentric annuli, which will allow us to use Corollary \ref{cor:totalcapacityonannuli} in conjunction with Theorem \ref{lemma:capacitycomplement}. It is an improvement over lemmas 3.1 and 3.2 in \cite{cha-man}, in which the authors find conditions on $\delta$ and $N$ such that every ball $b(j)$ has small intersection with $\Omega_\lambda^c$. 
\end{remark}

\begin{proof}
Since $\lambda > \lambda_0$ and $\delta < \delta_0$,  we have by inequality \eqref{eq:maxprinciple} and equation \eqref{eq:ballsmaxchoice} that 

\begin{align}\label{eq:growthinbigballs}
    \sup\limits_{B_{\frac{\delta j(n)}{4N\sqrt{\lambda}}}} |\phi_{\lambda}| \leq C_1\sup\limits_{b_{j(n)}} |\phi_{\lambda}| \, .
\end{align}

We now introduce the following result on the growth of  $\phi_\lambda$:

\begin{lemma}[Lemma 3.2 in \cite{cha-man}]\label{lemma:growth}
    There exist constants $C_2(g)$ and $c_2(g)$ such that if $\sup\limits_{\Omega} |\phi_{\lambda}| \leq 1$, \linebreak $\sup\limits_{b_{j(n)}} |\phi_{\lambda}| > C_2$, and $\mu_d(b_{j(n)} \cap \Omega) \geq  \mu_d(b_{j(n)})/2$, then

\begin{equation}\label{eq:eqgrowthcharronmangoubi}
   \sup\limits_{2 b_{j(n)}} |\phi_{\lambda}| \geq \left(\sup\limits_{b_{j(n)}} |\phi_{\lambda}| \right)^{1+c_2} \, .
\end{equation}
\end{lemma}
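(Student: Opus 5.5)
We prove the growth statement of Lemma \ref{lemma:growth} by the Remez inequality (Theorem \ref{cor:remez-eigen}) applied on $2b_{j(n)}$, combined with the a priori bound $\mathcal N\le C_{DF}\sqrt\lambda$ (Theorem \ref{thm:df-growth}), which is $O(1)$ at these scales. Throughout, write $b=b_{j(n)}$, $\rho=\delta/(4N\sqrt\lambda)$ for its radius, $M_1=\sup_b|\phi_\lambda|$ and $M_2=\sup_{2b}|\phi_\lambda|$. Set $E=b\cap\Omega$; by hypothesis $\mu_d(E)\ge \mu_d(b)/2$, and on $E$ we have $|\phi_\lambda|\le 1$.

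\emph{Step 1 (Remez on $b$).} Apply Theorem \ref{cor:remez-eigen} on the ball $b$ with the set $E$. Since $\mu_d(b)/\mu_d(E)\le 2$ and $\sup_E|\phi_\lambda|\le 1$, this gives
\[
M_1\le C_R\,(2C_R)^{C_R\mathcal N(\phi_\lambda,b)+C_R}.
\]
Taking logarithms,
\[
\log M_1\le C\big(\mathcal N(\phi_\lambda,b)+1\big).
\]
If $M_1>C_2$ with $C_2$ large, the additive constant is absorbed, and we obtain $\mathcal N(\phi_\lambda,b)\ge c\log M_1$.

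\emph{Step 2 (convert the doubling index into growth).} By definition of the doubling index, $\log_2(M_2/M_1)=\mathcal N(\phi_\lambda,b)\ge c\log M_1$. Hence
\[
M_2\ge M_1^{1+c_2}
\]
for some $c_2(g)>0$, which is \eqref{eq:eqgrowthcharronmangoubi}. This step needs $2b\subset U_x$, which holds because $\rho<\delta_0\lambda^{-1/2}$ and we are inside $B_{r_0/2}$. It also needs $2\rho<\lambda^{-1/2}$ so that Theorem \ref{cor:remez-eigen} applies; this holds once $\delta_0<1/2$.

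\emph{Main obstacle.} The delicate point is that Theorem \ref{cor:remez-eigen} bounds the sup by $\sup_E$ raised to a power depending on the doubling index of the \emph{same} ball. One must therefore check that the exponent $C_R\mathcal N+C_R$ really forces $\mathcal N$ to be large when $M_1$ is large. This is fine because the base satisfies $2C_R>1$ (enlarge $C_R$ if needed), so the inequality is monotone in $\mathcal N$.

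A more careful variant would apply Remez on $2b$ with $E\subset b$ (measure ratio $2^{d+1}$) to bound $M_2$ directly, but that gives an upper bound and is not what we need. The direct route above, Remez on $b$ to force $\mathcal N(\phi_\lambda,b)$ large, then the definition of $\mathcal N$, is the one I would follow. The constant $C_2$ is chosen so that $\log M_1\ge 2C\cdot C_R$, and $c_2$ is taken as $c/\log 2$ adjusted accordingly.
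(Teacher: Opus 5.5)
Your proposal is correct and is essentially the paper's proof: apply the Remez inequality on $b$ with $E=b\cap\Omega$ to get $\mathcal N(\phi_\lambda,b)\ge c\log M_1$ once $M_1>C_2$, then unwind the definition of the doubling index to obtain $M_2\ge M_1^{1+c_2}$. One correction: your opening claim that the Donnelly--Fefferman bound $C_{DF}\sqrt\lambda$ is ``$O(1)$ at these scales'' is false, since the doubling index at sub-wavelength scales can genuinely be of order $\sqrt\lambda$, and the paper's later comparison with $C_{DF}\sqrt\lambda$ depends on this. You never use that claim, so simply delete it.
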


For the convenience of the reader, here is a proof of Lemma \ref{lemma:growth}.

\begin{proof}
By Theorem \ref{cor:remez-eigen},

\begin{align*}
    \sup\limits_{b_{j(n)}}|\phi_\lambda| \leq C_R \sup_\Omega |\phi_\lambda| \left( \frac{C_R \mu_d(b_{j(n)})}{\mu_d(b_{j(n)} \cap \Omega)} \right)^{C_R \mathcal{N}(\phi_\lambda, b_{j(n)})+C_R} \, ,\\
    \leq C_R (2C_R)^{C_R \mathcal{N}(\phi_\lambda, b_{j(n)})+C_R} \, .
\end{align*}

Therefore, $\mathcal{N}(\phi_\lambda,b_{j(n)}) \geq c \log_2(\sup\limits_{b_{j(n)}}|\phi_\lambda| )-C$, which implies that $\mathcal{N}(\phi_\lambda,b_{j(n)}) \geq c_2 \log_2(\sup\limits_{b_{j(n)}}|\phi_\lambda| )$ if $\log_2(\sup\limits_{b_{j(n)}}|\phi_\lambda| ) $ is large enough and $c_2$ is taken small enough. This in turn implies that 

\begin{equation}
    \log_2 \frac{\sup\limits_{2b_{j(n)}}|\phi_\lambda| }{\sup\limits_{b_{j(n)}}|\phi_\lambda| } \geq c_2 \log_2 \sup\limits_{b_{j(n)}}|\phi_\lambda| \, ,
\end{equation}
which is equivalent to

\begin{equation}
\sup\limits_{2b_{j(n)}}|\phi_\lambda| \geq \sup\limits_{b_{j(n)}}|\phi_\lambda|^{1+c_2} \, .
\end{equation}

\end{proof}

Now,  fix $L(g) \geq C_2$ and $c(g)$ such that for any $l \geq L$ , 

\begin{equation}\label{eq:ineqconstants}
    l^{1+c} \leq (l/C_1)^{1+c_2} \, .
\end{equation}.

Assuming that $\sup\limits_{B_{\frac{\delta j(n)}{4N\sqrt{\lambda}}}} |\phi_{\lambda}|> L$, applying inequalities \eqref{eq:growthinbigballs}, \eqref{eq:eqgrowthcharronmangoubi} and \eqref{eq:ineqconstants} gives us the following:

\begin{align}
    \sup\limits_{B_{\frac{\delta j(n+1)}{4N\sqrt{\lambda}}}} |\phi_{\lambda}| \geq   &\, \, \, \left.\sup\limits_{2 b_{j(n)}} |\phi_{\lambda}|\right.  \, ,\nonumber\\
    \geq &\left(\sup\limits_{B_{\frac{\delta j(n)}{4N\sqrt{\lambda}}}} |\phi_{\lambda}| /C_1\right)^{1+c_2} \, , \nonumber\\
    \geq &\, \, \,\sup\limits_{B_{\frac{\delta j(n)}{4N\sqrt{\lambda}}}} |\phi_{\lambda}|^{1+c} \, ,
\end{align}
which completes the proof of Lemma \ref{lemma:inductive}.

\end{proof}

Applying Lemma \ref{lemma:inductive} recursively, we obtain the following:
\begin{corollary}\label{lemma:growthmaximal}
There are constants $\delta_0(g)$,  $\eta_0(d)$, $\lambda_0(g)$, $L(g)$ and $c(g)$  with the following property:
Let $\delta < \delta_0$, $\lambda > \lambda_0$, $\eta < \eta_0$ and $\phi_{\lambda}$ be a solution to $\Delta_{g} \phi= \lambda \phi$ on $U_{x}$.
Assume that $\Omega$ is an open subset of ${B_{\frac{\delta}{\sqrt{\lambda}}}}$ and that $\text{Cap}(\Omega^c \cap B_{\frac{\delta}{\sqrt{\lambda}}}) \leq \eta \, Cap(B_{\frac{\delta}{\sqrt{\lambda}}})$. If $\sup\limits_{B_{\frac{\delta}{2\sqrt{\lambda}}}} |\phi_{\lambda}| \geq L \sup\limits_\Omega|\phi_{\lambda}|$, then  

\begin{align}
    \mathcal{N}(B_{\frac{\delta}{2\sqrt{\lambda}}},\phi_{\lambda}) &\geq e^{e^{c/\eta}} \quad && \text{if}\quad d=3 \, ,\\
     \mathcal{N}(B_{\frac{\delta}{2\sqrt{\lambda}}},\phi_{\lambda}) &\geq e^{c \eta^{3-d}} \quad && \text{if} \quad d \geq 4 \, .
\end{align}

\end{corollary}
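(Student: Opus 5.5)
We normalize $\sup_\Omega|\phi_\lambda| = 1$; this is harmless because the hypotheses and the doubling index are invariant under scaling $\phi_\lambda$. With $N = N(\eta)$ as in Corollary \ref{cor:totalcapacityonannuli}, we take $N$ as large as allowed: $N \approx e^{C/\eta}$ if $d=3$ and $N \approx C\eta^{3-d}$ if $d\geq 4$. We choose the balls $b_j$ as in Lemma \ref{lemma:inductive}, i.e. each $b_j$ touches a point of $\partial B_{\frac{\delta j}{4N\sqrt\lambda}}$ where $|\phi_\lambda|$ attains its maximum on that sphere, and $b_j$ lies inside the corresponding annulus. Applying Corollary \ref{cor:totalcapacityonannuli} to $K=\Omega^c\cap \overline{B_{\delta/\sqrt\lambda}}$, after rescaling by $\delta/\sqrt\lambda$ (capacity scales as in \eqref{eq:scalingcapacity}), we obtain $J\geq N$ indices $2N+1\le j(1)<\dots<j(J)\le 4N$. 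For each of these, $\mu_d(b_{j(n)}\cap\Omega)\ge \mu_d(b_{j(n)})/2$, which is exactly the hypothesis needed in Lemma \ref{lemma:growth}.

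Next comes the iteration. Set $M_n := \sup_{B_{\delta j(n)/(4N\sqrt\lambda)}}|\phi_\lambda|$. Since $j(1)\ge 2N+1$, the ball of radius $\frac{\delta j(1)}{4N\sqrt\lambda}$ contains $B_{\frac{\delta}{2\sqrt\lambda}}$. The hypothesis therefore gives $M_1 \ge \sup_{B_{\delta/(2\sqrt\lambda)}}|\phi_\lambda|\ge L$. The sequence $M_n$ is nondecreasing, so $M_n > L$ for every $n$, and Lemma \ref{lemma:inductive} applies at every step (if $M_1=L$ exactly, we enlarge $L$ slightly). By induction, $M_J \geq M_1^{(1+c)^{J-1}} \ge L^{(1+c)^{N-1}}$. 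On the other hand, $j(J)\le 4N$ gives $M_J\le \sup_{B_{\delta/\sqrt\lambda}}|\phi_\lambda|$. Taking $\log_2$ of the ratio,
$$\mathcal N\big(\phi_\lambda,B_{\frac{\delta}{2\sqrt\lambda}}\big)=\log_2\frac{\sup_{B_{\delta/\sqrt\lambda}}|\phi_\lambda|}{\sup_{B_{\delta/(2\sqrt\lambda)}}|\phi_\lambda|}.$$
We bound the numerator below by $M_J$ and the denominator above by $M_{J'}$ for an intermediate index, where $J'\approx J/2$ indices are used to bound $\sup_{B_{\delta/(2\sqrt\lambda)}}$. The cleanest way is to apply the iteration only from the first index onward. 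We then get $\log_2\sup_{B_{\delta/\sqrt\lambda}}|\phi_\lambda| \ge (1+c)^{N-1}\log_2 \sup_{B_{\delta/(2\sqrt\lambda)}}|\phi_\lambda|$. Since $\sup_{B_{\delta/(2\sqrt\lambda)}}|\phi_\lambda|\ge L>2$, this yields $\mathcal N \ge ((1+c)^{N-1}-1)\log_2 L \ge e^{cN}$ for $N$ large, with $c$ adjusted. Because $\eta<\eta_0$ makes $N$ large, the small-$N$ case never arises.

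Finally, we substitute $N$. For $d=3$, $N\approx e^{C/\eta}$ gives $\mathcal N\ge e^{c e^{C/\eta}}\ge e^{e^{c/\eta}}$ for $\eta$ small, after renaming constants. For $d\ge4$, $N\approx C\eta^{3-d}$ gives $\mathcal N\ge e^{c\eta^{3-d}}$.

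The main point to check is the compatibility of normalizations. Lemma \ref{lemma:inductive} is stated with the hypothesis $\sup_{B_{j(n)/(4N)}}|\phi_\lambda|>L$, which should be read in the rescaled radius $\delta j(n)/(4N\sqrt\lambda)$. We must also make sure that Corollary \ref{cor:totalcapacityonannuli} can be invoked with $N$ of full size $e^{C/\eta}$ (resp. $C\eta^{3-d}$), rather than merely with $N$ bounded above by it. This works by choosing $N$ to be the integer part of that bound; such an $N$ exceeds $N_0$ once $\eta_0$ is small.
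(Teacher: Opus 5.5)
Your proposal is correct and follows the paper's proof. You normalize $\sup_\Omega|\phi_\lambda|=1$, take $N$ of full size in Corollary \ref{cor:totalcapacityonannuli}, iterate Lemma \ref{lemma:inductive} along the $J\ge N$ good indices starting from $M_1\ge\sup_{B_{\delta/(2\sqrt\lambda)}}|\phi_\lambda|\ge L$, and then substitute $N(\eta)$. Your bookkeeping $\log_2\sup_{B_{\delta/\sqrt\lambda}}|\phi_\lambda|\ge(1+c)^{N-1}\log_2\sup_{B_{\delta/(2\sqrt\lambda)}}|\phi_\lambda|$ is in fact what justifies the paper's passage from $\sup_{B_{\delta/\sqrt\lambda}}|\widetilde{\phi_\lambda}|\ge L^{(1+c)^{N-1}}$ to $\mathcal N\ge\log_2 L^{(1+c)^{N-1}-1}$, a step the paper leaves implicit.

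One minor slip is your choice $K=\Omega^c\cap\overline{B_{\delta/\sqrt\lambda}}$. Since $\Omega\subset B_{\delta/\sqrt\lambda}$ is open, this $K$ contains the whole sphere $\partial B_{\delta/\sqrt\lambda}$ and so has the capacity of the full ball, which violates the hypothesis of Corollary \ref{cor:totalcapacityonannuli}. You should instead work with $\Omega^c\cap B_{\delta/\sqrt\lambda}$ (through its compact subsets), as in the assumption; this leaves the measure conditions on the balls $b_j$ unchanged.
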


\begin{proof}
Setting $\widetilde{\phi_\lambda} := \frac{\phi_\lambda}{ \sup\limits_\Omega|\phi_{\lambda}|}$, we now fix $N = e^{C(d)/\eta}$ (if $d=3$) or $N= C(d) \eta^{3-d}$ (if $d \geq 4$), balls $b_j$, $2N+1 \leq j \leq 4N$, $J \geq N$ and a subsequence $j(n)$, $1 \leq n \leq J$, as in Lemma \ref{lemma:inductive}.

Trivially, $\sup\limits_{B_{\frac{\delta j(1)}{4N \sqrt{\lambda}}}} |\widetilde{\phi_\lambda}| \geq \sup\limits_{B_{\frac{\delta}{2\sqrt{\lambda}}}} |\widetilde{\phi_\lambda}|$. We can apply Lemma \ref{lemma:inductive} $J-1$ times to obtain that 

\begin{equation}
    \sup\limits_{B_{\frac{\delta}{\sqrt{\lambda}}}} |\widetilde{\phi_\lambda}| \geq L^{(1+c(g))^{N-1}}  \, .
\end{equation}

Therefore, 

\begin{align}\label{eq:lowerbounddoubling}
    \mathcal{N}(B_{\frac{\delta}{2\sqrt{\lambda}}},\phi_{\lambda}) &\geq \log_2 L^{{(1+c)}^{N-1}-1}\, ,\\
    &\geq e^{cN} \, ,
\end{align}
which completes the proof of Lemma \ref{lemma:growthmaximal}.
\end{proof}

\section{Growth of $|\phi_\lambda|$ in small balls around nodal maxima: completing the proof of Theorem \ref{theorem:maintheorem}}

Combining Corollary \ref{lemma:growthmaximal} with Theorem \ref{lemma:capacitycomplement}, we obtain the following upper bound on $|\phi_\lambda|$ on small balls around the origin on the chart $U_x$:

\begin{lemma}\label{lemma:estimatearoundmaximum}
Let $\Omega_\lambda$ be a nodal domain of $\phi_\lambda$ and $x$ be any point where $|\phi_\lambda|$ attains its maximum on $\Omega_\lambda$. Then, there are constants $\lambda_0(g)$, $c(g)$ and $L(g)$ such that if $\lambda > \lambda_0$ and if we choose $\delta(\lambda)$ as
\begin{align}
    \delta(\lambda):= \quad &c(g)(\log\log\lambda)^{-1/2}  \quad & if \quad d=3 \, ,\\
    \delta(\lambda):= \quad &c(g)(\log\lambda)^{\frac{3-d}{2}} \quad & if \quad d \geq 4 \, ,
\end{align}

then, in the chart $U_x$, $\sup\limits_{B_{\frac{\delta}{2 \sqrt{\lambda}}}}|\phi_\lambda| \leq L \phi_\lambda (0)$.

\end{lemma}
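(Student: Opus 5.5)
Without loss of generality $\phi_\lambda(0)=\sup_{\Omega_\lambda}|\phi_\lambda|>0$, replacing $\phi_\lambda$ by $-\phi_\lambda$ if needed. The plan is a contradiction argument. Assume $\sup_{B_{\delta/(2\sqrt\lambda)}}|\phi_\lambda| > L\,\phi_\lambda(0)$. We then pit the growth forced by Corollary \ref{lemma:growthmaximal} against the Donnelly--Fefferman bound of Theorem \ref{thm:df-growth}. The set we feed into Corollary \ref{lemma:growthmaximal} is $\Omega:=\Omega_\lambda\cap B_{\delta/\sqrt\lambda}$, read in the chart $U_x$; it is open and satisfies $\sup_\Omega|\phi_\lambda|=\phi_\lambda(0)$.

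First we check the capacity hypothesis. Since $\Omega^c\cap B_{\delta/\sqrt\lambda}=\Omega_\lambda^c\cap B_{\delta/\sqrt\lambda}$, Theorem \ref{lemma:capacitycomplement} gives, for $\delta<\delta_0$,
\[
\text{Cap}\left(\Omega^c\cap B_{\frac{\delta}{\sqrt\lambda}}\right)\leq C\delta^2\,\text{Cap}\left(B_{\frac{\delta}{\sqrt\lambda}}\right).
\]
So Corollary \ref{lemma:growthmaximal} applies with $\eta:=C\delta^2$, provided $\delta$ is small enough that $\eta<\eta_0$. The set $\Omega_\lambda^c\cap B$ need not be compact; I would use its closure within $\overline{B}$, or note that the corollary only needs the annulus-counting property. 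Under the contradiction hypothesis the corollary yields
\[
\mathcal N\left(\phi_\lambda,B_{\frac{\delta}{2\sqrt\lambda}}\right)\ge e^{e^{c/\eta}} \text{ if } d=3, \qquad \mathcal N\left(\phi_\lambda,B_{\frac{\delta}{2\sqrt\lambda}}\right)\ge e^{c\eta^{3-d}} \text{ if } d\ge4.
\]

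Next we apply Theorem \ref{thm:df-growth}, valid since $\delta/(2\sqrt\lambda)<r_0/2$ for large $\lambda$. It gives $\mathcal N\le C_{DF}\sqrt\lambda$. We now plug in the choice of $\delta(\lambda)$.
\begin{itemize}
\item For $d=3$: $\eta=Cc(g)^2(\log\log\lambda)^{-1}$, so $e^{c/\eta}=(\log\lambda)^{c/(Cc(g)^2)}$. Taking $c(g)$ small makes this exponent larger than $1$, so $e^{e^{c/\eta}}\ge \lambda^{K}$ with $K>1/2$. For large $\lambda$ this exceeds $C_{DF}\sqrt\lambda$.
\item For $d\ge4$: $\eta^{3-d}=(Cc(g)^2)^{3-d}(\log\lambda)^{(3-d)^2\cdot\frac{1}{1}\cdot}$. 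More carefully, $\delta^2=c(g)^2(\log\lambda)^{3-d}$, so $\eta^{3-d}$ equals $(Cc(g)^2)^{3-d}(\log\lambda)^{(3-d)^2}$. This is at least $(\log\lambda)$ times a large constant, since $(3-d)^2\ge1$. Hence $e^{c\eta^{3-d}}\ge\lambda$ for large $\lambda$ and small $c(g)$, again beating $C_{DF}\sqrt\lambda$.
\end{itemize}
Either way we get a contradiction, which proves the lemma.

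The main obstacle is the bookkeeping in the exponent check. We must ensure that the constant $c$ in Corollary \ref{lemma:growthmaximal} and the constant $C$ in Theorem \ref{lemma:capacitycomplement} are independent of $\delta$ and $\lambda$, so that shrinking $c(g)$ really makes the exponent large. We must also verify the side conditions $\delta<\delta_0$, $\eta<\eta_0$, $\lambda>\lambda_0$, and that $N(\eta)$ is a legitimate integer above $N_0$. All of these hold once $\lambda$ is large and $c(g)$ is small.
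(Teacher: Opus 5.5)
Your proposal is correct and is essentially the paper's proof: you argue by contradiction, feed $\eta = C\delta^2$ from Theorem \ref{lemma:capacitycomplement} into Corollary \ref{lemma:growthmaximal}, and play the resulting lower bound on $\mathcal{N}(\phi_\lambda, B_{\frac{\delta}{2\sqrt{\lambda}}})$ against the Donnelly--Fefferman bound $C_{DF}\sqrt{\lambda}$; you are also slightly more careful than the paper in applying the capacity estimate on $B_{\frac{\delta}{\sqrt{\lambda}}}$, which is the ball the corollary actually needs, and in checking that $\sup_\Omega|\phi_\lambda|=\phi_\lambda(0)$. One caveat for $d\geq 5$: the exponent $\eta^{3-d}$ printed in Corollary \ref{lemma:growthmaximal} is stronger than what Lemma \ref{lemma:additivityofcapacity} delivers, namely $N\lesssim \eta^{-1/(d-3)}$ (the paper's own proof of this lemma in fact uses $e^{c\delta^{2/(3-d)}}$), so the extra power $(\log\lambda)^{(3-d)^2}$ in your check is an artefact; with the correct exponent you get $\mathcal{N}\geq \lambda^{c\,(Cc(g)^2)^{-1/(d-3)}}$, which still beats $C_{DF}\sqrt{\lambda}$ once $c(g)$ is small, so your argument goes through unchanged.
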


\begin{proof}
Take $\delta(\lambda) < \delta_0$ to be determined later, where $\delta_0$ is taken from Lemma \ref{lemma:growthmaximal}. 
 
 We now assume that on $U_x$, $\sup\limits_{B_{\frac{\delta}{2 \sqrt{\lambda}}}}|\phi_\lambda| > L \phi_\lambda (0)$.

 By Theorem \ref{lemma:capacitycomplement}, $\text{Cap}(B_{\frac{\delta}{2 \sqrt{\lambda}}} \cap \Omega_{\lambda}^c) \leq C(g)\delta^2 Cap(B_{\frac{\delta}{2 \sqrt{\lambda}}})$. There exists $\delta'(g)$ such that if $\delta < \delta'$, taking $\eta = C(g)\delta^2$ satisfies the assumptions of Corollary \ref{lemma:growthmaximal}. Therefore,  there exists $c(g)$ such that

\begin{align}\label{eq:growthestimates}
    \mathcal{N}(B_{\frac{\delta}{2 \sqrt{\lambda}}},\phi_\lambda) &\geq e^{e^{c \delta^{-2}}} &&  \text{if}\quad d=3\, ,\nonumber\\
 \mathcal{N}(B_{\frac{\delta}{2 \sqrt{\lambda}}},\phi_\lambda) &\geq e^{c \delta^{\frac{2}{3-d}}} && \text{if}\quad d \geq 4 \, .
 \end{align}

 Theorem \ref{thm:df-growth} ensures that

\begin{equation}\label{eq:DonFef}
    \mathcal{N}(B_{\frac{\delta}{2 \sqrt{\lambda}}},\phi_\lambda)\leq C_{DF}\sqrt{\lambda}.
\end{equation}

Therefore, if $\delta< \min(\delta_0, \delta')$ and $\sup\limits_{B_{\frac{\delta}{2 \sqrt{\lambda}}}}|\phi_\lambda| > L \phi_\lambda (0)$,  combining inequalities \eqref{eq:growthestimates} and \eqref{eq:DonFef} gives us that

\begin{align}
   \delta &\geq c (\log\log\lambda)^{-1/2} && \text{if} \quad  d=3 \, ,\\
   \delta &\geq c (\log\lambda)^{\frac{3-d}{2}} && \text{if} \quad  d \geq 4 \, .
\end{align}

which completes the proof of Lemma \ref{lemma:estimatearoundmaximum}.

\end{proof}

We are now ready to complete the proof of Theorem \ref{theorem:maintheorem}.

\begin{proof}[Proof of Theorem \ref{theorem:maintheorem}]
    Let $\Omega_\lambda$ be a nodal domain of $\phi_\lambda$ and $x$ be a point where $\phi_\lambda$ attains its maximum on $\Omega_\lambda$. Let $\lambda >\lambda_0(g)$ and $\delta(\lambda)$ be defined as in Lemma \ref{lemma:estimatearoundmaximum}.
     By Lemma \ref{lemma:estimatearoundmaximum}, there exists $L(g)$ such that on the chart $U_x$, $$\sup\limits_{B_{\frac{\delta}{2 \sqrt{\lambda}}}} |\phi_\lambda| \leq L\phi_\lambda(0) \, .$$

    Since $\lambda > \lambda_0$ implies that $\delta(\lambda)<\delta_0$, elliptic estimates (Corollary \ref{cor:ellipticestimates}) imply that

    \begin{align}
        \sup\limits_{B_{\frac{\delta}{4 \sqrt{\lambda}}}}|\nabla \phi_\lambda| \leq \frac{C L\sqrt{\lambda}\phi_\lambda(0)}{\delta} \, .
    \end{align}

Therefore, $\phi_\lambda$ does not change sign on $B_{\frac{c \delta}{\sqrt{\lambda}}}$. Since $M$ is smooth and closed, there is a geodesic ball of radius $\frac{c \delta}{\sqrt{\lambda}}$ around $x$ where $\phi_\lambda$ does not change sign, which completes the proof of Theorem \ref{theorem:maintheorem}.

    \end{proof}

\section{Proof of Theorem \ref{theorem:maintheoremshrinking}}\label{section:proofofshrinking}

Let $r_d$ be chosen such that $\lambda_1(B_{r_d}) = 1$. For a fixed $n$, we choose $A_{n,d}$ as $B_{r_d}$ with a finite collection of thin sets removed such that $\lambda_1(A_{n,d}) < 2$, $\text{Inrad}(A_{n,d}) \leq 1/n$ and $\partial A_{n,d}$ is smooth and connected\footnote{Note that the choice of $1/n$ is arbitrary, and can be replaced by any quantity that is $o_{n \to \infty}(1).$}. This is possible since segments have capacity zero in $\mathbb R^d$ if $d \geq 3$ (see \cite{rau-tay}), capacity is subadditive and we have convergence of Dirichlet eigenvalues for sequences of increasing domains (Proposition 2.4 in \cite{ber-col}). We then set $U_{n,d}:= \sqrt{\lambda_1(A_{n,d})} A_{n,d}$. By construction, $\lambda_1(U_{n,d})=1$, $U_{n,d} \subset B_{2r_d}$ and $\text{Inrad}(U_{n,d})< \sqrt{2}/n$.

We now identify $\mathbb T^d$ with $[-\pi, \pi]^d$. By theorem 7.1 of \cite{enc-persal1}, for any $\epsilon >0$, there exists $k(n,d,\epsilon)$, an eigenvalue $\lambda_{k}$ (which we periodically extend to $\mathbb R^d$), a toral eigenfunction $\phi_{\lambda_{k}}$ with a nodal domain $\Omega_{\lambda_{k}}$ and a diffeomorphism $\psi_k: \mathbb R^d \to \mathbb R^d$ such that $||\psi_k- \text{Id}||_{C^1} \leq \epsilon$ and$\Omega_{\lambda_{k}} = {\lambda_{k}}^{-1/2} \psi(U_{n,d})$.

Fix $\epsilon(n,d)$ small enough such that, for some $k(n,d, \epsilon(n,d))$ large enough, $\text{Inrad}(\Omega_{\lambda_{k}}) \leq \frac{2}{n\sqrt{\lambda_{k}}}$. A simple diagonalization argument completes the proof of Theorem \ref{theorem:maintheoremshrinking}.

\vspace{3ex}

\end{document}